\newcommand{\mcP}{\mathcal{P}}
\newcommand{\mcR}{\mathcal{R}}
\newcommand{\Nc}{\mathcal{N}_C}
\newcommand{\Nr}{\mathcal{N}_r}
\newcommand{\No}{\mathcal{N}_\omega}
\newcommand{\mcL}{\mathcal{L}}
\newcommand{\mcH}{\mathcal{H}}
\newcommand{\mcW}{\mathcal{W}}
\newcommand{\mcQ}{\bm{\mathcal{Q}}}
\newcommand{\mcC}{\mathcal{C}}
\newcommand{\mcV}{\mathcal{V}}
\newcommand{\mcT}{\mathcal{T}}
\newcommand{\mcI}{\mathcal{I}}
\newcommand{\bfg}{{\boldsymbol g}}
\newcommand{\bff}{{\boldsymbol f}}
\newcommand{\bfp}{{\boldsymbol p}}
\newcommand{\bfP}{{\boldsymbol P}}
\newcommand{\bfq}{{\boldsymbol q}}
\newcommand{\bfR}{{\boldsymbol R}}
\newcommand{\bfe}{{\boldsymbol e}}
\newcommand{\bfpsi}{{\boldsymbol \psi}}
\newcommand{\bfphi}{{\boldsymbol \phi}}
\newcommand{\bftphi}{\widetilde{\boldsymbol \phi}}
\newcommand{\bfPhi}{{\boldsymbol \Phi}}
\newcommand{\bfzero}{{\boldsymbol 0}}
\newcommand{\bfpi}{{\boldsymbol \pi}}
\newcommand{\diam}{\text{diam}}
\newcommand{\realmr}{\mathbb{R}^{\Nr}}
\newcommand{\poly}{\mathbb{P}}
\newcommand{\mean}{{\operatorname{mean}}}
\newcommand{\dotphi}{\dot{\phi}}
\newcommand{\tildedotphi}{\dot{\widetilde{\phi}}}
\newcommand{\dotbfphi}{\dot{\bfphi}}
\newcommand{\dotbftphi}{\dot{\widetilde{\bfphi}}}
\newcommand{\dotbfe}{{\dot{\bfe}}}
\newcommand{\ATOL}{\text{ATOL}}
\newcommand{\LATOL}{\text{LATOL}}
\newcommand{\xMAX}{\text{xMAX}}
\newcommand{\tMAX}{\text{tMAX}}
\newcommand{\sMAX}{\text{sMAX}}
\newcommand{\new}{\text{new}}
\renewcommand{\AE}{\widetilde{E}}
\providecommand{\abs}[1]{\lvert#1\rvert}
\providecommand{\norm}[1]{\lVert#1\rVert}
\providecommand{\scalar}[1]{\langle#1\rangle}
\numberwithin{equation}{section}
\numberwithin{figure}{section}
\numberwithin{table}{section}
\newtheorem{theorem}{Theorem}[section]
\newtheorem{definition}{Definition}[section]
\journal{CMAME}
\begin{document}


\begin{frontmatter}

\title{Adaptive Finite Element Solution of Multiscale PDE-ODE Systems}
\date{\today}

\author[aj]{ A.~Johansson}
\ead{august@simula.no}
\author[jc]{J.~H.~Chaudhry}
\ead{jehanzebh@hotmail.com}
\author[vc]{V.~Carey}
\ead{varis@ices.utexas.edu}
\author[de]{D.~Estep\corref{cor1}}
\ead{estep@stat.colostate.edu}
\author[vg]{V.~Ginting}
\ead{vginting@uwyo.edu}
\author[ml]{M.~Larson}
\ead{mats.larson@math.umu.se}
\author[st]{S.~Tavener}
\ead{tavener@math.colostate.edu}

\cortext[cor1]{Corresponding author}

\address[aj]{Center for Biomedical Computing, Simula Research Laboratory, P.O. Box
134, 1325 Lysaker, Norway}
\address[jc]{Department of Scientific Computing, Florida State University, Tallahassee, FL 32306}
\address[vc]{Institute for Computational Engineering and Sciences, University of Texas at Austin,
Austin, TX 78712}
\address[de]{Department of Statistics, Colorado State University, Fort Collins, CO 80523}
\address[vg]{Department of Mathematics, University of Wyoming, Laramie, WY 82071}
\address[ml]{Dept.of Mathematics, Umea University, S-90187 Umea, Sweden}
\address[st]{Department of Mathematics, Colorado State University, Fort Collins, CO 80523}

\begin{abstract}
  We consider adaptive finite element methods for solving a multiscale system consisting of a macroscale model
  comprising a system of reaction-diffusion partial differential
  equations coupled to a microscale model comprising a system of
  nonlinear ordinary differential equations. A motivating example is modeling the electrical activity of the
  heart taking into account the chemistry inside cells in the
  heart. Such multiscale models pose extremely computationally
  challenging problems due to the multiple scales in time and space that
  are involved.

  We describe a mathematically consistent approach to couple the
  microscale and macroscale models based on introducing an intermediate
  ``coupling scale''. Since the ordinary differential
  equations are defined on a much finer spatial
  scale than the finite element discretization
  for the partial differential equation, we introduce a Monte Carlo approach to sampling the fine scale ordinary differential equations.    We derive goal-oriented a posteriori error
  estimates for quantities of interest computed from the solution of the multiscale model using adjoint problems and computable
  residuals. We distinguish the errors in time and space for the partial
  differential equation and the ordinary differential equations
  separately and include errors due to the transfer of the solutions
  between the equations. The estimate also includes terms reflecting the sampling of the microscale model. Based on the accurate error estimates, we
  devise an adaptive solution method  using a ``blockwise''
  approach. The method and estimates are
  illustrated using a realistic problem.
\end{abstract}

\begin{keyword}
  {\em a posteriori} error analysis, adaptive error control,
  adaptive mesh refinement, adjoint problem, coupled physics, duality, generalized Green's function, goal oriented error estimates, multiscale model, residual, variational analysis
\end{keyword}

\end{frontmatter}

\section{Introduction}

Our interest in problems consisting of a macroscale time dependent partial
differential equation (PDE) coupled to a miscroscale system of  ordinary
differential equations (ODEs) originates in the modeling of the
electrical activity in the heart. The standard macroscale model of
 electrical phenomena in cardiac tissue is the bidomain
model proposed by Tung \cite{tungphd}, which consists of parabolic
and elliptic PDEs modeling the macroscopic potential
distribution. These PDEs are derived assuming a representation of
the tissue as two anisotropic media, one intracellular, which is
strongly anisotropic, and one extracellular, which is weakly
anisotropic. If the two media are assumed to have proportional
conductivity tensors, it is possible to reduce the model to the
monodomain model, consisting of a single reaction-diffusion PDE with a
load depending on the solution to the ODEs.

On the cellular level, the electrical activity may be modeled by a set
of ODEs that depends on a potential determined by the solution of the
PDE. Many cellular models are available, both phenomenological models
that try to mimic measurements, and physiological models which are
based on measurements as well as physiological theory. The latter may
be very complex, involving up to hundred variables.  For a review on mathematical models describing the electrical activity in the heart, see Sundnes et.~al.~\cite{heartbook} and the references
therein. A space-time adaptive method can be found in Colli Franzone
and coworkers \cite{collifranzone}. A survey of heart modeling can be
found in Noble \cite{noble_modeling}.

Specialized numerical methods are required for high fidelity simulation of the heart, since the heart may consist of up to $10^{10}$ cells
\cite{heartbook}, each modeled by a set of ODEs, and it is impossible
to solve the PDE on the same spatial scale as the ODEs. Moreover, determining the actual physical geometry and location of cells is itself a difficult problem. Thus, including the cellular scale phenomena in the macroscale discretization requires some form of ``upscaling'' or ``recovery'' of the information provided by the microscale modeling. This is in fact necessary if the PDE
model derived using homogenization as is the case for
the bidomain equations.

Unfortunately, this subtle mathematical issue is often ignored in  computational electrocardiography, where it is common to simply evaluate the ODEs in the cells located at the quadrature points of a finite element method, for example. However, this is not mathematically consistent, since caused the model to change with the PDE discretization. One consequence, for example, is that it is impossible to perform a mesh convergence study, which is the crudest form of uncertainty quantification.

As an alternative, we create an intermediate ``coupling'' or ``mesoscale'' representation of the cellular scale physics that is used to exchange information between the macroscale and microscale. To deal with the very large number of cells, we create the mesoscale representation by sampling cells at the microscale at random and taking averages over the mesoscale cells.

Another potential issue in such coupled systems is significant differences in the temporal scales in the different components. For example, in a system where the ODEs model chemical reactions and the PDE models global behavior such as transport,  it is likely that the dynamics
of the chemical systems are much faster than that of
the total system. Coupled PDE-ODE systems where the ODEs
describe chemical reactions and the PDE describe transport occur in
applications such as the study of pollution in groundwater, surface
water and the atmosphere, control theory and semiconductor simulation. To deal with this, we allow the PDE and ODE systems to employ significantly different time steps.

Numerical solutions of such multiscale systems are invariably affected by error arising from numerous discretization effects and present significant discretization challenges in terms of obtaining a desired accuracy \cite{estepms}. It is therefore critically important to accurately estimate the numerical error in computed quantities of interest and devise efficient discretization parameter selection algorithms. In this paper, we derive goal-oriented a posteriori error estimates that distinguish the relative contributions of various discretization effects, and thus provide the capability of adjusting various discretization parameters to efficiently obtain a desired accuracy. The error analysis is based on a posteriori error estimates that employ computable residuals and adjoint equations, see \cite{actaintro,svartadoden,memoir,BangerthRannacher,giles_suli} for general information. For applications to multiscale systems, see \cite{estepms,victoropsplit,CET09,Logg1,gintingode,CEGT12a,vet13}. We base the adaptive strategy on the block adaptive approach described in \cite{blockadap}.

The content of this paper is organized as follows: In Section 2, we
formulate the multiscale model. In Section 3, we describe the discretization methods. The
a posteriori error analysis is presented in Section 4. In Section 5, we describe some implementation details and the adaptive algorithm. A numerical example is presented in Section 6. The paper ends with a conclusion in Section 7.

\section{Model description}

The nominal model problem consists of a reaction-dif\-fusion PDE that describes macroscale behavior over a
domain $\Omega$  coupled to systems of ODEs that model processes taking place inside small
``cells'' $C$ that comprise the heart domain.  The coupling of
the macro- and micro-scale processes taking place on vastly different
scales in space and time raise serious challenges for analyzing the
behavior and computing solutions of the model. We first describe the
original coupled system, then we describe a new system that includes a coupling
mechanism that provides an avenue to address these challenges.

\subsection{The original model}

The region $\Omega= \cup_{i=1}^{\Nc} C_i$ is comprised of $\Nc$ cells
$C_i$ indexed as $\{1, \ldots, \Nc\}$. We model the {\em microscale}
behavior using a collection of ODEs: Find $\bfp_i \in
[\mcC^1(0,T)]^{\Nr}$ solving
\begin{equation}\label{origchemistry}
  \begin{cases}
    \dot{\bfp_i} = \bfg_i(u;\bfp_i), & t \in (0,T], \\
      \bfp_i(0) = \bfp_i^0, &
  \end{cases}, \quad i = 1, \ldots, \Nc,
\end{equation}
where $\bfp_i$ is a vector of length $\Nr$ and $u$ is the solution of
a PDE modeling the macroscopic behavior described below. In the
context of (\ref{origchemistry}), $u$ has the role of a parameter. We
have allowed the model for the microscale behavior $\bfg_i$ to vary
with each cell. For simplicity of notation, we have assumed the same
number of equations in each cell model, however this is not necessary.

In order to introduce the microscale solutions of the reactants into
the macroscale model, we define the piecewise constant function
$\bfp(x)$ for $x\in \Omega$,
\begin{equation*}
  \bfp(x,t) = \bfp_i(t), \quad (x,t) \in  C_i \times (0,T], \quad i = 1, \ldots, \Nc.
\end{equation*}
The {\em macroscale} model problem reads: Find $u(x,t) \in
\mcC^1((0,T); \mcC^2(\Omega))$ such that
\begin{equation}  \label{origPDE}
  \begin{cases}
    \dot{u} - \nabla \cdot \epsilon \nabla u = f(u; \bfp), & (x,t) \in \Omega \times (0,T],\\
      n \cdot \epsilon \nabla u = 0,  & (x,t) \in \partial \Omega \times (0,T], \\
        u(x,0) = u^0(x), & x \in \Omega,
  \end{cases}
\end{equation}
where $\epsilon=\epsilon(x) \geq \epsilon_0>0$ is a continuous function and
$f$ and $u^0$ are sufficiently smooth functions. In this equation, $\bfp$
now plays the role of a parameter, but one that varies in space on the
microscale.

\subsection{Multiscale coupling}
\label{sect:multiscale_coupling}

The coupled system (\ref{origchemistry})-(\ref{origPDE}) immediately
raises several issues:
\begin{itemize}
\item The solutions of the microscale ODEs (\ref{origchemistry})
  vary in space on the scale of the cells. This happens both because
  of varying cell model and because the ODE model
  (\ref{origchemistry}) depend on experimentally-determined
  parameters that vary stochastically. This microscale variation
  introduces extremely rapid variation in the coefficient $f$ on the
  scale of the macroscale PDE (\ref{origPDE}) along with
  discontinuities across cell boundaries. We would therefore have to
  use a spatial discretization for (\ref{origPDE}) that is finer than
  the cells while being consistent with the cell boundaries in order
  to achieve full order accuracy in numerical solutions.
\item The ODE system (\ref{origchemistry}) has an extremely large
  dimension $\Nc\times\Nr$, with the consequence that the solution of
  many systems of nonlinear ODEs are required to advance the PDE
  solution if we solve the microscale model (\ref{origchemistry}) in
  every cell. This raises another significant computational burden.
\item At the same time, we expect to see a macroscale pattern in
  variations in cell type and model, which implies it is inefficient
  to integrate the microscale ODEs in every cell.
\end{itemize}

To deal with these issues, we introduce a {\em coupling scale}
decomposition of $\Omega$. We assume that $\Omega = \cup_{j=1}^{\No}
\omega_j$ is decomposed into a set of non-overlapping regions
$\omega_i$. Each $\omega_j$ is comprised of a collection of cells
$\omega_j = \cup_{i \in \mcI_j} C_i$, where $\mcI_j$ is a subset of
the indices $\{1, \ldots, \Nc\}$. We assume the collection
$\{\mcI_j\}$ is non-intersecting while their union equals $\{1,
\ldots, \Nc\}$. To smooth out the cell-scale variation in the reaction
model (\ref{origchemistry}), we average the reaction solutions over
$\omega_i$. We introduce a ``recovery'' operator $\mcR :
\mathbb{R}^{\Nc\times\Nr} \to [\mcL^2(\Omega)]^{\Nr}$ defined as
\begin{equation}\label{recovery}
  \mcR \bfp (x) = \frac{1}{|\omega_j|} \int_{\omega_j} \bfp(y) \, dy
  = \frac{1}{|\omega_j|} \left( \sum_{i \in \mcI_j} \bfp_i |C_i| \right),
  \quad x \in \omega_j, \quad j = 1, \ldots, \No ,
\end{equation}
where $|\omega_j|$ and $|C_i|$ denote the volume of the indicated
region and cell respectively. Note that $\sum_{i \in \mcI_j} |C_i| =
|\omega_j|$. The function $\mcR \bfp$ is piecewise constant, but now
varies on the coupling scale rather than the cell scale.

One reasonable criteria to choosing the intermediate scale cells is to assume that the same reaction model $\bfg_i$
is used for each cell $C_i$ in each coupling region $\omega_j$.  We
now replace the original {\em macroscale} model problem
(\ref{origPDE}) by
\begin{equation}  \label{PDE}
  \begin{cases}
    \dot{u} - \nabla \cdot \epsilon \nabla u = f(u; \mcR \bfp), & (x,t) \in \Omega \times (0,T],\\
      n \cdot \epsilon \nabla u = 0,  & (x,t) \in \partial \Omega \times (0,T], \\
        u(x,0) = u^0(x), & x \in \Omega,
  \end{cases}
\end{equation}

Next we note that in the original formulation, $\bfp_i$ depends
implicitly on the spatial variable $x$ (which has the role of a
parameter) {\em inside} each cell. To avoid this, we introduce a
projection of $u$ into a space of functions that are constant on each
cell. We let $\mcP: \mcL^2(\Omega) \to \mathbb{R}^{\Nc}$ be a suitably
chosen projection into functions that are piecewise constant on the
cells and we replace (\ref{origchemistry}) by
\begin{equation}\label{chemistry}
  \begin{cases}
    \dot{\bfp_i} = \bfg_i(\mcP u; \bfp_i ), & t \in (0,T], \\
      \bfp_i(0) = \bfp_i^0, &
  \end{cases} \quad i = 1, \ldots, \Nc,
\end{equation}
When the exact spatial location of each cell is unavailable, as often
is the case, we use a projection $\mcP$ into the space of functions
that are constant on the coupling scale domains $\omega_j$, which also
produces a function that is constant on each cell.

\section{Multirate finite element methods}
\label{sect:fem}

In order to derive a variational a posteriori error estimate, we write
the time discretization as a finite element method for a piecewise
polynomial while using a common finite element method for spatial
discretization. Combined with suitable quadrature formulas, the
resulting approximations match standard finite difference schemes.

\subsection{Variational formulation}
To this end, we let $(\cdot,\cdot)_X$ denote the inner product on
$\mcL^2(X)$ on a space $X$ with corresponding norm $\norm{\cdot}_X$
and let $a(v,w)$ denote the bilinear form $a(v,w)_X= (\epsilon \nabla
v,\nabla w)_X$. The subscript $X$ is dropped when
$X=\Omega$. Furthermore, we let $\scalar{\cdot,\cdot}_{C_j}$ denote
the inner product on $\realmr$ on cell $C_j$ and let
$\scalar{\cdot,\cdot} = \sum_{i=1}^{\Nc}
\scalar{\cdot,\cdot}_{C_i}$. The corresponding norm is denoted by
$\norm{\cdot}$, which is the same notation as for the $\mcL^2$-norm,
but it is obvious from the context which norm is
intended. Furthermore, we write the right hand side
functions $f$ and $\bfg$ as functions of two variables, replacing
'$;$' with '$,$'.

We first write (\ref{chemistry})-(\ref{PDE}) in
variational form: The solutions $\bfp_i \in [\mcH^1(0,T)]^{\Nr}$ of
(\ref{chemistry}) satisfy,
\begin{equation}
  \label{chemistryvar}
  \int_{0}^T \scalar{\dot{\bfp}_i,\bfq} \ dt
  = \int_{0}^T \scalar{\bfg_i(\mcP u,\bfp_i),\bfq} \ dt, \quad \forall
  \bfq \in [\mcL^2(0,T)]^{\Nr}, \; \quad i = 1, \ldots, \Nc,
\end{equation}
while the solution $u \in \mcL^2((0,T);\mcH^1(\Omega))$ satisfies,
\begin{equation}
  \label{PDEvar}
  \int_{0}^T (\dot{u},v) + a(u,v) \ dt  =
  \int_{0}^T (f(u,{\mcR} \bfp),v) \ dt, \quad \forall v \in \mcL^2((0,T);\mcH^1(\Omega)).
\end{equation}

\subsection{A multirate finite element method}\label{section:multirate}
We solve the coupled system (\ref{chemistry})-(\ref{PDE}) using a
discretization that allows different time steps to be used for the
ODEs and the PDE. The discretization yields a nonlinear coupled
system of discrete equations for the approximate solution that must be solved iteratively in practice. It is common to fix the number of iterations used for such coupled systems, which can significantly affect the properties of the resulting numerical solution. In the extreme case
with no iteration, this represents a so-called explicit-implicit
scheme.

For the temporal discretization for the PDE, the time interval $[0,T]$
is partitioned into $N$ subintervals $0 = t_0< t_1 < \cdots < t_N=T$,
and we denote each subinterval by $I_n=(t_{n-1},t_n]$ with length by
  $\Delta t_n = t_n - t_{n-1}$.  For the temporal mesh for the ODEs,
  each interval $I_n$ may be divided into $M_n$ subintervals by
  $t_{n-1}=s_{0,n} <s_{1,n} < \cdots < s_{M_n,n} = t_n$, where
  $J_{m,n} = (s_{m-1,n},s_{m,n}]$ and is of length $\Delta s_{m,n} =
    s_{m,n} - s_{m-1,n}$. This is illustrated in Fig.
    \ref{fig:timestepping}. We also allow for different time steps in
    different cells, but the notation becomes very cumbersome so we do
    not indicate this. Moreover, we note that it is possible to let
    the individual $\Nr$ ODE components have different time stepping
    as in \cite{Logg1}, but this is not considered here.

\begin{figure}
  \begin{center}
    \includegraphics[scale=0.5]{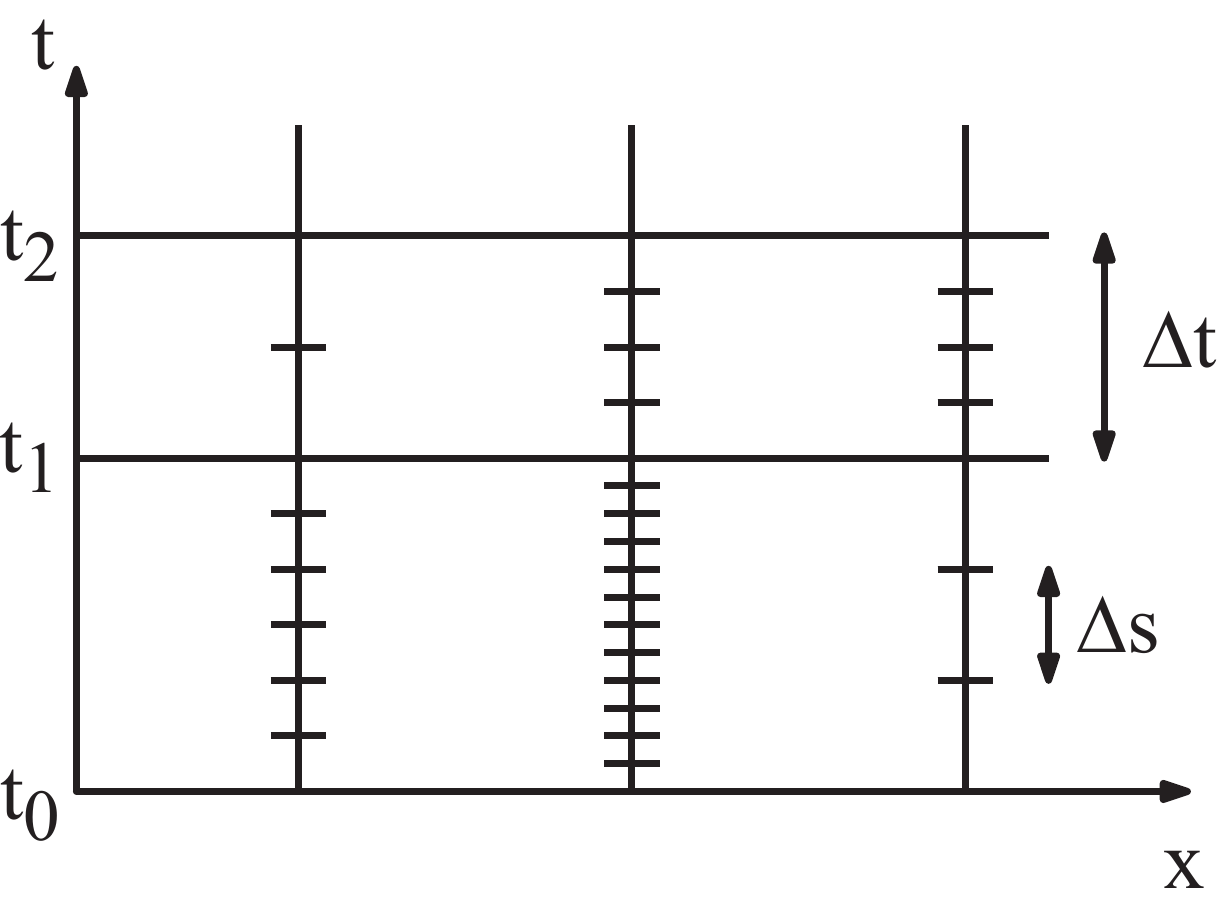}
    \caption{The PDE time steps $\Delta t$ (long horizontal lines) is
      allowed to depend on $t$ but not on $x$. The ODE time steps
      $\Delta s$ (short horizontal lines) may depend on both $t$ and
      $x$.}
    \label{fig:timestepping}
  \end{center}
\end{figure}

The space of polynomials of order $r$ is denoted $\poly^r$, and we
discretize all the $\Nr$ ODE components in the space of polynomials of
degree $r_p$, $ \mcQ_{m,n}^{r_p} = \{ \bfq(t) : \bfq|_{I_{m,n}} \in
[\poly^{r_p}(I_{m,n})]^{\Nr} \}$. We denote the space of functions
$\cup_{m=1}^{M_n} \mcQ_{m,n}^{r_p}$ by $\mcQ_n^{r_p}$. To simplify
notation, we use the same order of polynomial in all cells, though
this is not necessary.

For the PDE, we define a triangulation $\mcT_n^h$ of $\Omega$ on each
interval $I_n$ that is inconsistent with the coupling scale
decomposition $\Omega = \cup_{j=1}^{\No} \omega_j$: We let $\mcT_n^h$
be hexahedral elements and let the coupling scale partition be the
Voronoi tessellation defined by $\No$ uniformly distributed random
points in $\Omega$, i.e.\ each $\omega_j$ is a Voronoi cell
\cite{Aurenhammer}. An illustration of this tessellation can be seen in
Fig. \ref{fig:vorocube}.

Let $\mcV_n^h\subset \mcH^1(\Omega)$ be the space $ \mcV_n^h = \{ v(x)
\in \mcC(\Omega) : v|_K \in \poly^r(K), \ K\in \mcT_n^h \}$. To
indicate the sizes of the elements in $\mcV_n^h$, we introduce the
mesh function $h_K=\diam(K)$ for $K\in \mcT_n^h$ and $h = \max h_K$. The approximation is
in the space-time function space $ \mcW^{r_u}_n = \{ w(x,t) : w(x,t) =
\sum_{i=0}^{r_u} t^i v_i(x),\ v_i \in \mcV_n^h,\ (x,t) \in \Omega
\times I_n\}$.

\begin{figure}
  \begin{center}
    \includegraphics[scale=0.75]{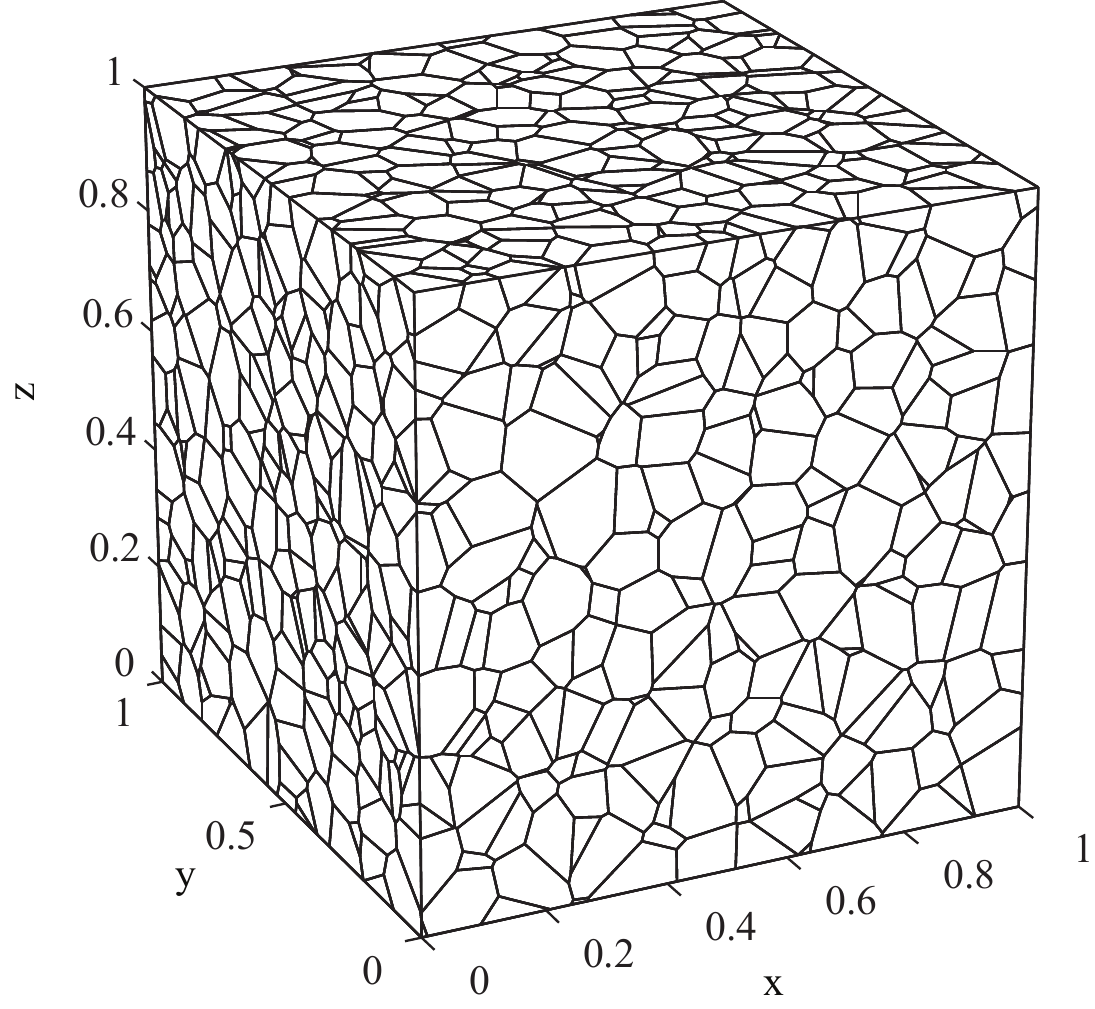}
    \caption{Illustration of a Voronoi tessellation with $\No=4096$.}
    \label{fig:vorocube}
  \end{center}
\end{figure}

The functions in $\mcW^{r_u}_n$ and $\mcQ^{r_p}_n$ are discontinuous
at time nodes, and we denote the jump across a time nodes $t_i$ by
$[v]_i = v_i^+-v_i^-$, where $v_i^\pm = \lim_{t\rightarrow t_i^\pm}
v(t)$. Finally in order to evaluate Galerkin orthogonality, we use the
following projection operators into the discrete spaces:
\begin{align} 
  \Pi_n&: \mcL^2(\Omega) \rightarrow \mcV_n^h, \\
  \pi_n^u &: \mcL^2(I_n) \rightarrow \poly^{r_u}(I_n), \\
  \bfpi_{m,n}^p &: [\mcL^2(I_{m,n})]^{\Nr} \rightarrow [\poly^{r_p}(I_{m,n})]^{\Nr}.
\end{align}
Note that $\pi_n^u \Pi_n = \Pi_n \pi_n^u : \mcL^2(\Omega \times I_n)
\rightarrow \mcW^{r_u}_n$.

The multirate finite element method now reads: For all cells in $C$
and for each time interval $n=1,\ldots, N$, find $\bfP \in
\mcQ_{m,n}^{r_p}$ for $m=1,\ldots,M_n$ such that
\begin{align}
  \label{Pdgimplicit}
  \int_{I_{m,n}} \scalar{\dot{\bfP},\bfq} \ dt + \scalar{[\bfP]_{m-1,n},\bfq^+}
  = \int_{I_{m,n}} \scalar{\bfg(\mcP U,\bfP),\bfq} \ dt, \quad \forall
  \bfq \in \mcQ_{m,n}^{r_p},
\end{align}
with $\bfP^{0-} = \bfp^0$. Note that there are $\Nc$ ODE systems in
(\ref{Pdgimplicit}). The PDE discretization is: Find $U \in
\mcW^{r_u}_n$ such that
\begin{align}
  \label{Udgimplicit}
  \int_{I_n} (\dot{U},v) + a(U,v) \ dt + ([U]_{n-1},v^+) &=
  \int_{I_n} (f(U,{\mcR} \bfP),v) \ dt, \quad \forall v \in \mcW^{r_u}_n,
\end{align}
where $U_0^{-}=\Pi_0 u^0$.

Note that if we use $r_p=r_u=0$ with a left-hand rectangle rule for
time integrals and employ a standard ``lumped-mass'' quadrature rule
in space (trapezoidal rule on elements), then we obtain a standard
difference approximation consisting of the implicit Euler in time and
5 point stencil difference scheme in space \cite{memoir}.

\subsection{Practical discretization considerations}\label{section:iterative}

There are additional considerations for discretization that are used
in practice.

\subsubsection*{Evaluation of the recovery operator}

The definition of the recovery operator $\mcR$ in (\ref{recovery})
requires the solution of the cell reaction equations (\ref{chemistry})
on all $\Nc$ cells. We expect $\Nc$ to be very large, e.g. on the
order of millions to billions. At the same time, we expect that the
physical properties of cells vary over the macroscale rather than
microscale, so it is inefficient to solve (\ref{chemistry}) on every
cell.  We approximate $\mcR$ by an average $\widetilde{\mcR}$ computed
using a Monte Carlo sampling approach with a (relatively small) sample
of the problems (\ref{chemistry}). As a consequence, we replace the
PDE equation (\ref{Udgimplicit}) by: Find $U \in \mcW^{r_u}_n$ such
that
\begin{align}
  \label{Udgimplicit_R}
  \int_{I_n} ( \dot{U},v) + a(U,v) \ dt + ([U]_{n-1},v^+) =
  \int_{I_n} (f(U,\widetilde{\mcR} \bfP),v) \ dt, \quad \forall v \in \mcW^{r_u}_n,
\end{align}
where $U_0^{-}=\Pi_0 u^0$.

There are two ways to compute the samples. First, we can sample in the
spatial variable $x$, so for $x \in \omega_j$,
\begin{equation}\label{MCrecovery1}
  \widetilde{\mcR} \bfp (x) =  \frac{1}{K_j} \sum_{k=1}^{K_j} \bfp( x_{j,k}),
\end{equation}
where $\{x_{j,k}\}_{k=1}^{K_j}$ is a set of $K_j$ points chosen at
random in $\omega_j$, for each $j = 1, \ldots, \No$. In this approach,
we solve the reaction model (\ref{chemistry}) only in cells located at
$\{x_{j,k}\}_{k=1}^{K_j}$. This means that the average is affected by
by physical size of the cells, since bigger cells contribute more
samples with some probability.  Alternatively, we can sample by
choosing cell indices at random, so for $x \in \omega_j$,
\begin{equation}\label{MCrecovery2}
  \widetilde{\mcR} \bfp (x) =  \frac{1}{K_j} \sum_{k=1}^{K_j} \bfp_{i_k},
\end{equation}
where $\{i_k\}$ is a randomly selected subset of $\mcI_j$. In the
second approach, the cell average is not affected by the size of the
cell.

Note that this Monte Carlo computation has the property that the computation is actually exact if sufficiently many samples are used, since there
are only $\Nc$ different values. However for very large $\Nc$ and
reasonably small numbers of samples, the convergence of the Monte
Carlo approximations appears to behave according to the standard asymptotic results, so
that the accuracy is roughly proportional to the variance of the
integrand divided by the square root of the number of samples. Since
we want to use relatively few samples, this indicates that the
coupling regions $\omega_j$ should be chosen so that the variance of
the integrand defining $\mcR$ has small variance.

\subsubsection*{Iterative solution of the nonlinear discrete equations}

In general, (\ref{Pdgimplicit})-(\ref{Udgimplicit_R}) presents a
nonlinear coupled discrete system that is solved iteratively in
practice. We describe a simple fixed point iteration.

We use the superscript $\ell$ to denote the iteration number and
assume that we carry out $L_n$ total iterations on each time step. In
practice, we may vary the number of iterations in each cell and for
the PDE, but we suppress that notation.  The fixed point - finite
element method can be formulated on time interval $I_n$ as: Given
$\bfP^0=\bfP(t_{n-1})$ and $U^0=U(x,t_{n-1})$, for each cell find
$\bfP^\ell \in \mcQ_{m,n}^{r_p}$ for $m=1,\dots,M_n$ on $I_n$ such
that
\begin{multline}
  \label{discPim}
  \int_{I_{m,n}} \scalar{\dot{\bfP^\ell},\bfq} \ dt + \scalar{(\bfP^{\ell+}-\bfP^-)|_{m-1,n},\bfq^+_{m-1,n}} = \\
  \int_{I_{m,n}} \scalar{\bfg(\mcP U^{\ell-1},\bfP^\ell),\bfq} \ dt, \quad \forall \bfq \in
  \mcQ_{m,n}^{r_p}.
\end{multline}
Then, given $\bfP^\ell$ on $I_n$, find $U^\ell \in \mcW^{r_u}_n$ such
that
\begin{multline}
  \label{discUim}
  \int_{I_n} (\dot{U}^\ell,v) + a(U^\ell,v) \ dt + ((U^{\ell+}-U^-)|_{n-1},v^+_{n-1}) \\
  =
  \int_{I_n} (f(U^\ell,\widetilde{\mcR} \bfP^\ell),v) \ dt, \quad \forall v \in \mcW^{r_u}_n,
\end{multline}
given $U^0$. We iterate \eqref{discPim} and \eqref{discUim} for
$\ell=1,\ldots,L_n$, set $\bfP^-_{n}= \bfP^{L_n}(t_n)$ and $U^-_n = U^
{L_n}(t_n)$ to form the data for the next interval.

In this iterative formulation, we are ``lagging'' the values of the
PDE model $U$ in the cell reaction model, but treat the remaining
nonlinear problems for $\bfP^\ell$ and $U^\ell$ implicitly. This
entails using an additional nonlinear solver for each model
equation. However, we do not indicate this in the notation as these
problems can typically be solved very accurately, e.g. using a
standard Newton method.

\subsubsection*{An implicit-explicit method}

In practice, the iterative formulation (\ref{discPim})-(\ref{discUim})
is often solved for only one iteration $L_n=1$, yielding an
``implicit-explicit'' method. This is: For $n=1,\ldots, N$, find $\bfP
\in \mcQ_{m,n}^{r_p}$ for $m=1,\ldots,M_n$ such that
\begin{align}
  \label{Pdg}
  \int_{I_{m,n}} \scalar{\dot{\bfP},\bfq} \ dt + \scalar{[\bfP]_{m-1,n},\bfq^+}
  = \int_{I_{m,n}} \scalar{\bfg(\mcP U_{n-1},\bfP),\bfq} \ dt, \quad \forall
  \bfq \in \mcQ_{m,n}^{r_p},
\end{align}
with $\bfP^{0-} = \bfp^0$. Then find $U \in \mcW^{r_u}_n$ such that
\begin{align}
  \label{Udg}
  \int_{I_n} (\dot{U},v) + a(U,v) \ dt + ([U]_{n-1},v^+) =
  \int_{I_n} (f(U,\widetilde{\mcR} \bfP),v) \ dt, \quad \forall v \in \mcW^{r_u}_n,
\end{align}
where $U_0^{-}=\Pi_0 u^0$. The scheme is said to be explicit-implicit
due to the use of explicit use of $U_{n-1}$ when solving for $\bfP$ on
$I_n$, while $U_n$ is solved implicitly.

\section{A posteriori error analysis}

In this section, we derive adjoint-based error representation formulas
for the methods above. Using these formulas, we derive indicators of local contributions to the global error that can serve as the basis for an adaptive method. Ignoring the effect of iteration in the solution of the discrete equations, there are three discretization parameters that affec the numerical accuracy, namely
the spatial mesh size, the time steps for the PDE, and the time steps
for the ODEs. In addition, there is a choice of the projection and
recovery operators. We present error indicators that distinguish the relative contributions of these choices to the global error. For the iterative method,
there is the additional parameter of number of iterations per time
step.

We begin by noting that since the right hand side of \eqref{Pdg}
involves the solution $U_{n-1}$ from the previous time step and since
the right hand side of \eqref{Udg} involves the approximation
$\widetilde{\mcR}$, there is an error arising from operator decomposition \cite{estepms} due the
differences $U$ and $U_{n-1}$ as well as a modeling error due to the
differences between $\mcR$ and $\widetilde{\mcR}$.

\subsection{Preliminaries}

The adjoint to the function spaces presented is denoted by the
superscript $*$. The same notation is  used for the adjoint
operators, as noted in the following identities
\begin{align}
  (v,\mcR \bfq) &= \scalar{ \mcR^* v,\bfq }, \\
  \scalar{\mcP v,\bfq} &= ( v,\mcP^* \bfq ),
\end{align}
which hold for any $\bfq\in \mcQ_{m,n}^{r_p}$ and $v\in \mcW_n^{r_u}$.

We define the residuals, the
linearizations of the functions $f$ and $\bfg$ and state the adjoint
problems. The derivation of the theorems follow in the next Section.

\begin{definition}\label{def:residuals}
  Let the residuals corresponding to \eqref{Pdg} and \eqref{Udg} be
  denoted by $\bfR_p(\bfP) \in \mcQ_{m,n}^{r_p *}$ and $R_{u,K}(U,\bfP)
  \in \mcW^{r_u *}_n$. These are defined by
  \begin{align}
    \label{resp}
    \int_{I_{m,n}} \scalar{\bfR_p(\bfP),\bfq} \ dt
    &= \int_{I_{m,n}} \scalar{\bfg(\mcP U_{n-1},\bfP) - \dot{\bfP},\bfq} \ dt -
    \scalar{[\bfP]_{m-1,n},\bfq_{m-1,n}^+}, \\
    \label{resu}
    \int_{I_n} (R_{u,K}(U,\bfP),v)_K \ dt
    &= \int_{I_n} (f(U,\widetilde{\mcR}\bfP) - \dot{U} + \nabla
    \cdot \epsilon \nabla U, v)_K \nonumber \\ &\qquad -
    \frac{1}{2}([n\cdot \epsilon \nabla U],v)_{\partial K\setminus
      \partial \Omega} \ dt
    - ([U]_{n-1},v_{n-1}^+)_K.
  \end{align}
\end{definition}

\begin{definition}\label{def:linearization}
  The linearizations of $f$ and $\bfg$ are defined as
  \begin{align}
    \overline{f_u} &= \int_0^1 \frac{\partial f}{\partial u} (us+U(1-s),\mcR \bfp s+\mcR \bfP (1-s)) \ ds,\\
    \overline{\bff_p} &= \left[ \int_0^1 \frac{\partial f}{\partial \mcR \bfp_j} (us+U(1-s),\mcR \bfp s+\mcR \bfP (1-s)) \ ds \right]_{j=1}^{\Nr},\\
    \overline{\bfg_u}  &= \left[ \int_0^1 \frac{\partial \bfg_i}{\partial \mcP u} (\mcP us+\mcP U(1-s),\bfp s+\bfP (1-s)) \ ds \right]_{i=1}^{\Nr}, \\
    \overline{\bfg_p} &= \left[ \int_0^1 \frac{\partial \bfg_i}{\partial \bfp_j} (\mcP us+\mcP U(1-s),\bfp s+\bfP (1-s)) \ ds \right]_{i,j=1}^{\Nr}.
  \end{align}
\end{definition}

\subsection{Adjoint problems}
The definition of an appropriate adjoint problem for a multiscale model is a problematic issue \cite{estepms}. First of all, there are a number of possible adjoint problems that can be associated with any nonlinear differential equation. In addition, it may be reasonable to account for the use of projections between scales and representations and the effects of a finite number of iterations in the definition. Another issue is the computational difficulty and cost required in the numerical solution of the adjoint.

We consider two different adjoint problems that present a tradeoff between accuracy of the resulting estimates on one hand and the cost and practicality of
implementation on the other. The first
approach is closely related to the ideal adjoint of the coupled
system~\eqref{chemistryvar}-\eqref{PDEvar} treated by an implicit discretization, a so-called ``implicit adjoint''. This choice leads to a robustly accurate error estimate, but the cost is that the linear adjoint problem is nearly as difficult to solve numerically as the original multiscale model.  As an alternative, we define a second adjoint problem that uses the same decomposition and finite iterations as used in the forward model.  This approach is much more computationally tractable, however the resulting error estimate includes terms that cannot be estimated. It is possible to show that these terms are relative small compared to the terms that can be estimated in the limit of refined discretization however.

\subsection{Fully implicit adjoint problem and error representation formula}
The ideal fully implicit adjoint reads: For $n=N,\ldots,1$, find $\bftphi_p(t) \in
[\mcL^2(I_{m,n})]^{\Nr}$ on $m=1,\ldots,M_n$ and
$\widetilde{\phi}_u(x,t) \in \mcL^2(I_n;\mcH^1(\Omega))$ such that
\begin{align}
  \label{adjointP_FI}
  \int_{I_{m,n}} \scalar{\bfq,-\dotbftphi_p}  - \scalar{\bfq,\overline{\bfg_p}\hspace{0pt}^* \bftphi_p + \mcR^* \overline{\bff_p}\hspace{0pt}^*  \widetilde{\phi}_{u}} \ dt &= \int_{I_{m,n}} \scalar{\bfq,\bfpsi_p} \ dt
\end{align}
and
\begin{align}
  \label{adjointU_FI}
  \int_{I_n} (v,-\tildedotphi_u) + a(v,\widetilde{\phi}_u)  - (v,\overline{f_u} \hspace{0pt}^* \widetilde{\phi}_u + \mcP^* \overline{g_u}\hspace{0pt}^* \bftphi_p) \ dt &= \int_{I_n} (v,\psi_u) \ dt,
\end{align}
where $\psi_u\in \mcL^2(\Omega)$ and $\bfpsi_p \in
[\mcL^2(\Omega)]^{\Nr}$ are given data that determine the quantity of interest to be computed. The initial conditions
for the adjoint problem are $\widetilde{\phi}_u(x,T) = 0$ and
$\bftphi_p(T) = \mathbf{0}$.  Next we derive the error representation
formula corresponding to this fully implicit adjoint problem.

\begin{theorem}[Error representation formula]\label{lem:errrep_fi}
  Let the quantity of interest be the linear functional $m(u,\bfp)$
  be defined by the functions $\psi_u\in \mcL^2(\Omega)$ and
  $\bfpsi_p \in [\mcL^2(\Omega)]^{\Nr}$ such that $m(u,\bfp) = \int_0^T
  (u,\psi_u) + \scalar{ \bfp,\bfpsi_p } \ dt$. The error representation
  formula for the error $E(U,\bfP) = \abs{ m(u,\bfp) - m(U,\bfP)}$ reads
  \begin{align}
    \label{errorformula2_fi}
    E(U,\bfP)
    &=  \Big| \sum_{n=1}^N \int_{I_n} (e_u,\psi_u) + \scalar{\bfe_p,\bfpsi_p} \ dt \Big| \\
    &= \Big| I + \sum_{n=1}^N ( II_n + III_n + IV_n + V_n + V\!I_n) \Big|,
  \end{align}
  where
  \begin{align}
    \label{errorI_fi}
    I &=  (u^0-\Pi_0 u^0,\widetilde{\phi}_{u,0}), \\
    \label{errorII_fi}
    II_n &= \sum_{K\in \mcT_n^h} \int_{I_n} (R_{u,K}(U,\bfP), \widetilde{\phi}_u - \Pi_n \pi_n^u \widetilde{\phi}_u)_K \ dt, \\
    \label{errorIII_fi}
    III_n &= \sum_{m=1}^{M_n} \int_{I_{m,n}} \scalar{\bfR_p(\bfP), \bftphi_p - \bfpi_{m,n}^p \bftphi_p} \ dt, \\
    \label{errorIV_fi}
    IV_n &= \int_{I_n} (f(U,\mcR \bfP) - f(U,\widetilde{\mcR}\bfP), \widetilde{\phi}_u) \ dt, \\
    \label{errorV_fi}
    V_n &=  \int_{I_n} \scalar{ \bfg(\mcP U,\bfP) - \bfg(\mcP U_{n-1},\bfP), \bftphi_p } \ dt.
  \end{align}
\end{theorem}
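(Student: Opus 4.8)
The plan is to derive an \emph{exact} error representation by substituting the error into the adjoint equations, recovering the nonlinear fluxes through the mean-value linearizations of Definition~\ref{def:linearization}, and then integrating by parts in time and space while exploiting Galerkin orthogonality to expose the computable residuals of Definition~\ref{def:residuals}. Since $m$ is linear, $m(u,\bfp)-m(U,\bfP)=\sum_{n=1}^N\int_{I_n}(e_u,\psi_u)+\scalar{\bfe_p,\bfpsi_p}\,dt$, where $e_u=u-U$ and $\bfe_p=\bfp-\bfP$. The first move is to test \eqref{adjointU_FI} with $v=e_u$, summed over $n$, and \eqref{adjointP_FI} with $\bfq=\bfe_p$, summed over $m$ and $n$ --- legitimate since $e_u\in\mcL^2(I_n;\mcH^1(\Omega))$ and $\bfe_p\in[\mcL^2(I_{m,n})]^{\Nr}$ lie in the respective test spaces --- and to add the two identities, moving $\mcR$ and $\mcP$ across the inner products via $(v,\mcR\bfq)=\scalar{\mcR^*v,\bfq}$ and $\scalar{\mcP v,\bfq}=(v,\mcP^*\bfq)$. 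This rewrites $\sum_n\int_{I_n}(e_u,\psi_u)+\scalar{\bfe_p,\bfpsi_p}\,dt$ as a sum of space--time integrals in $e_u$, $\bfe_p$, the adjoints $\widetilde{\phi}_u,\bftphi_p$, and the linearizations $\overline{f_u},\overline{\bff_p},\overline{\bfg_u},\overline{\bfg_p}$.

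Next I would invoke the fundamental theorem of calculus along the segments defining the linearizations, so that $\overline{f_u}\,e_u+\overline{\bff_p}\cdot\mcR\bfe_p=f(u,\mcR\bfp)-f(U,\mcR\bfP)$ and $\overline{\bfg_u}\,\mcP e_u+\overline{\bfg_p}\,\bfe_p=\bfg(\mcP u,\bfp)-\bfg(\mcP U,\bfP)$, and split $e_u=u-U$, $\bfe_p=\bfp-\bfP$ in the time-derivative and $a(\cdot,\cdot)$ terms. The contributions carrying the exact solution are dispatched by testing \eqref{PDEvar} with $\widetilde{\phi}_u$ and \eqref{chemistryvar} with $\bftphi_p$ and integrating by parts in time (the regularity assumed in \eqref{origchemistry}--\eqref{origPDE} is more than enough); with the terminal conditions $\widetilde{\phi}_u(\cdot,T)=0$, $\bftphi_p(T)=\mathbf{0}$ this produces $\int_0^T(u,-\tildedotphi_u)+a(u,\widetilde{\phi}_u)-(f(u,\mcR\bfp),\widetilde{\phi}_u)\,dt=(u^0,\widetilde{\phi}_{u,0})$ and $\int_0^T\scalar{\bfp,-\dotbftphi_p}-\scalar{\bfg(\mcP u,\bfp),\bftphi_p}\,dt=\scalar{\bfp^0,\bftphi_{p,0}}$, cancelling the $u$- and $\bfp$-parts.

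For the contributions carrying the discrete solution I would integrate by parts in time on each $I_n$ (resp.\ $I_{m,n}$), carefully tracking the jumps $[U]_{n-1}$ and $[\bfP]_{m-1,n}$; telescoping across the time nodes extracts the boundary values $(\Pi_0u^0,\widetilde{\phi}_{u,0})$ from $U_0^-=\Pi_0u^0$ and $\scalar{\bfp^0,\bftphi_{p,0}}$ from $\bfP^{0-}=\bfp^0$. Integrating by parts in space creates the element-boundary flux jumps $\frac{1}{2}[n\cdot\epsilon\nabla U]$, and the interior time-node jumps together with the volume terms reassemble, term by term, into $-\sum_{K}\int_{I_n}(R_{u,K}(U,\bfP),\widetilde{\phi}_u)_K\,dt$ and $-\sum_m\int_{I_{m,n}}\scalar{\bfR_p(\bfP),\bftphi_p}\,dt$, up to the difference between the data inside the residuals ($f(U,\widetilde{\mcR}\bfP)$, $\bfg(\mcP U_{n-1},\bfP)$) and the data left by the linearization step ($f(U,\mcR\bfP)$, $\bfg(\mcP U,\bfP)$). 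Finally, subtracting zero in the form of the discrete equations \eqref{Udg} and \eqref{Pdg} tested against the interpolants $\Pi_n\pi_n^u\widetilde{\phi}_u\in\mcW^{r_u}_n$ and $\bfpi_{m,n}^p\bftphi_p\in\mcQ_{m,n}^{r_p}$ --- Galerkin orthogonality of the residuals --- replaces $\widetilde{\phi}_u$ by $\widetilde{\phi}_u-\Pi_n\pi_n^u\widetilde{\phi}_u$ and $\bftphi_p$ by $\bftphi_p-\bfpi_{m,n}^p\bftphi_p$, yielding exactly $II_n$ and $III_n$.

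Collecting: the two initial boundary terms combine into $I=(u^0-\Pi_0u^0,\widetilde{\phi}_{u,0})$; the ODE initial terms cancel (there is no ODE initial-data error since $\bfP^{0-}=\bfp^0$); the residual contributions give $\sum_n(II_n+III_n)$; the recovery discrepancy $f(U,\mcR\bfP)-f(U,\widetilde{\mcR}\bfP)$ tested against $\widetilde{\phi}_u$ gives $IV_n$; the operator-decomposition lag $\bfg(\mcP U,\bfP)-\bfg(\mcP U_{n-1},\bfP)$ tested against $\bftphi_p$ gives $V_n$; and the remaining consistency contribution is collected as $VI_n$. Taking absolute values yields \eqref{errorformula2_fi}. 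The main obstacle is bookkeeping: carrying signs and time-node jumps through the integrations by parts so that the telescoping leaves precisely the \emph{projected} initial datum $\Pi_0u^0$ and the leftover jumps, volume terms, and spatial flux jumps reproduce $R_{u,K}$ and $\bfR_p$ verbatim, while keeping the two ``forward model versus scheme'' mismatches ($\mcR$ versus $\widetilde{\mcR}$, and $U$ versus $U_{n-1}$) cleanly separated so that they land in $IV_n$ and $V_n$ and do not pollute the residual terms.
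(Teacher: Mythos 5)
Your proposal is correct and follows essentially the same route as the paper's proof: test the fully implicit adjoint with the errors $e_u$ and $\bfe_p$, recover the nonlinear differences via the mean-value linearizations, eliminate the exact-solution contributions using the forward equations, integrate by parts in time and space to expose the residuals, jumps and the initial-data term, isolate the $\mcR$-versus-$\widetilde{\mcR}$ and $U$-versus-$U_{n-1}$ mismatches by adding and subtracting $f(U,\widetilde{\mcR}\bfP)$ and $\bfg(\mcP U_{n-1},\bfP)$, and finish with Galerkin orthogonality of \eqref{Pdg} and \eqref{Udg} to insert the interpolants. One remark: for the fully implicit adjoint the derivation closes with terms $I$ through $V_n$ alone, so the ``remaining consistency contribution collected as $V\!I_n$'' in your final step is empty here --- the $V\!I_n$ appearing in the displayed sum is a leftover from the implicit-explicit case of Theorem \ref{lem:errrep}, where it arises from adding and subtracting the term involving $\phi_{u,n}$.
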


The first term is the contribution from error in the initial data for the PDE. The second
and the third terms quantify the contributions of the discretization of the
PDE and the ODEs respectively. The fourth term quantifies the contribution of the recovery
operator and the fifth term is the contribution of the explicit splitting
scheme.
\begin{proof}
  Introducing the errors $e_u = u-U$ and $\bfe_p = \bfp - \bfP$, we
  use the chain rule identities to obtain,
  \begin{align}
    \scalar{\overline{\bfg_u} \mcP e_u + \overline{\bfg_p} \bfe_p, \bfq} &=
    \scalar{ \bfg(\mcP u,\bfp)-\bfg(\mcP U,\bfP), \bfq}, \label{crid_1}\\
    (\overline{f_u}e_u + \overline{\bff_p} \mcR \bfe_p,v) &=
    (f(u,\mcR \bfp)-f(U,\mcR \bfP),v) \label{crid_2}.
  \end{align}
  Note that \eqref{crid_1} holds on each $I_{m,n}$ whereas
  \eqref{crid_2} holds on each $I_n$. Furthermore, by the continuity of
  $u$,
  \begin{equation}
    \begin{aligned}
      e_{u,(m-1,n)}^+ &= u_{m-1,n}^+ - U_{m-1,n}^+ = (u_{m-1,n}^- -  U_{m-1,n}^-) - (U_{m-1,n}^+ - U_{m-1,n}^-) \\&= e_{u,(m-1,n)}^- - [U]_{m-1,n}.
    \end{aligned}
  \end{equation}
  Similarly,
  \begin{equation}
    \label{eq:err_p_jump}
    \begin{aligned}
      \bfe_{p,(m-1,n)}^+ &= \bfe_{p,(m-1,n)}^- - [\bfP]_{m-1,n}.
    \end{aligned}
  \end{equation}
  Now substituting $\bfe_p$ for $\bfq$ in \eqref{adjointP_FI} and
  applying integration by parts we arrive at,
  \begin{equation}
    \begin{aligned}
      \int_{I_{m,n}} \scalar{\bfe_p,\bfpsi_p} \ dt &= \int_{I_{m,n}} \scalar{\bfe_p ,-\dotbftphi_p}  - \scalar{\bfe_p ,\overline{\bfg_p}\hspace{0pt}^* \bftphi_p + \mcR^* \overline{\bff_p}\hspace{0pt}^*  \widetilde{\phi}_{u}} \ dt \\
      &= -\scalar{\bfe_p^- ,\bftphi_p}_{m-1,n} +  \scalar{\bfe_p^+ ,\bftphi_p}_{m-1,n}  \\
      & \phantom{a} \quad + \int_{I_{m,n}} \scalar{\dotbfe_p ,\bftphi_p}  - \scalar{\overline{\bfg_p} \bfe_p , \bftphi_p} \ dt -  \int_{I_{m,n}}\scalar{ \overline{\bff_p} \mcR \bfe_p , \widetilde{\phi}_{u}} \ dt
    \end{aligned}
  \end{equation}
  Now we apply \eqref{eq:err_p_jump},
  \begin{equation}
    \label{eq:err_ep}
    \begin{aligned}
      \int_{I_{m,n}} \scalar{\bfe_p,\bfpsi_p} \ dt &= -\scalar{\bfe_p^- ,\bftphi_p}_{m,n} + \scalar{\bfe_p^- ,\bftphi_p}_{m-1,n}  - \scalar{[\bfP]_{m-1,n},\bftphi_{p,(m-1,n)}}\\
      & \phantom{a} \quad + \int_{I_{m,n}} \scalar{\dotbfe_p ,\bftphi_p}  - \scalar{\overline{\bfg_p} \bfe_p , \bftphi_p} - \scalar{ \overline{\bff_p} \mcR \bfe_p , \widetilde{\phi}_{u}} \ dt.
    \end{aligned}
  \end{equation}
  A similar computation for $e_u$ leads to,
  \begin{equation}
    \label{eq:err_eu}
    \begin{aligned}
      \int_{I_{n}} (e_u,\psi_u) \ dt &=  -(e_u^-,\widetilde{\phi}_u)_{n} + (e_u^-,\widetilde{\phi}_u)_{n-1} - ([U]_{n-1},\widetilde{\phi}_{u,(n-1)}) \\&
      + \int_{I_{n}} (\dot{e}_u,\widetilde{\phi}_u)
      + a(e_u,\widetilde{\phi}_u)  - (\overline{f_u} e_u,\widetilde{\phi}_u )- ( \overline{g_u} \mcP e_u, \bftphi_p) \ dt &.
    \end{aligned}
  \end{equation}
  Summing \eqref{eq:err_ep} over all $m$, combining it with
  \eqref{eq:err_eu} and using \eqref{crid_1} and
  \eqref{crid_2} leads to,
  \begin{equation}
    \label{eq:err_eu_ep_I_mn}
    \begin{aligned}
      \int_{I_n} (e_u,\psi_u)& +   \scalar{\bfe_p,\bfpsi_p} \ dt = -\scalar{\bfe_p^- ,\bftphi_p}_{n} -(e_u^-,\widetilde{\phi}_u)_{n}\\
      &  + \scalar{\bfe_p^- ,\bftphi_p}_{n-1}  + (e_u^-,\widetilde{\phi}_u)_{n-1}+ \sum_{m=1}^{M_n} \Big\lbrace  -\scalar{[\bfP]_{m-1,n},\bftphi_{p,(m-1,n)}}  \\
      &+  \int_{I_{m,n}} \scalar{\dotbfe_p ,\bftphi_p} -
      \scalar{ \bfg(\mcP u,\bfp)-\bfg(\mcP U,\bfP), \bftphi_p} \ dt\Big\rbrace -([U]_{n-1},\widetilde{\phi}_{u,(n-1)})
      \\&+ \int_{I_n} (\dot{e}_u,\widetilde{\phi}_u)
      + a(e_u,\widetilde{\phi}_u)  - (f(u,\mcR \bfp)-f(U,\mcR \bfP),\widetilde{\phi}_u) \ dt.
    \end{aligned}
  \end{equation}
  Now, using \eqref{PDE} and
  \eqref{chemistry}, and summing over all $n$,
  \begin{equation}
    \label{eq:err_sum_n_steps}
    \begin{aligned}
      \sum_{n=1}^N \int_{I_n} (e_u,\psi_u)& +   \scalar{\bfe_p,\bfpsi_p} \ dt = (u^0-\Pi_0 u^0,\widetilde{\phi}_{u,0}) + \sum_{n=1}^N \Bigg\lbrace
      \\ \sum_{m=1}^{M_n} \Big\lbrace
      &-\scalar{[\bfP]_{m-1,n},\bftphi_{p,(m-1,n)}}  -  \int_{I_{m,n}} \scalar{\dot{\bfP} ,\bftphi_p} -
      \scalar{ \bfg(\mcP U,\bfP), \bftphi_p} \ dt\Big\rbrace \\
      &-([U]_{n-1},\widetilde{\phi}_{u,(n-1)})
      - \int_{I_n} (\dot{U}_u,\widetilde{\phi}_u)
      + a(U,\widetilde{\phi}_u)  -f(U,\mcR \bfP),\widetilde{\phi}_u) \ dt \Bigg\rbrace.
    \end{aligned}
  \end{equation}
  Addition and subtraction of $\int_{I_n} (f(U,
  \widetilde{\mcR}\bfP),\widetilde{\phi}_u) \ dt$ and $\int_{I_n}
  \scalar{ \bfg(\mcP U_{n-1},\bfP),\bftphi_p } \ dt$, integration by
  parts on $a(U,\widetilde{\phi}_u)$ and the use of Galerkin
  orthogonalities \eqref{Pdg} and \eqref{Udg} to subtract the
  interpolants $\Pi_n \pi_n^u \widetilde{\phi}_u$ and $\bfpi_{m,n}^p
  \bftphi_p$ completes the proof.
\end{proof}

\subsection{The implicit-explicit adjoint problem and error representation formula}
Lemma \ref{lem:errrep_fi} gives an error representation formula that requires numerical solution of the fully implicit adjoint problem. Alternatively, we consider an adjoint that employs the same discretization steps as used for the original model. The exact
implicit-explicit adjoint corresponding to the implicit-explicit numerical scheme reads: For $n=N,\ldots,1$, find $\bfphi_p(t) \in
[\mcL^2(I_{m,n})]^{\Nr}$ on $m=1,\ldots,M_n$ such that
\begin{align}
  \label{adjointP2}
  \int_{I_{m,n}} \scalar{\bfq,-\dotbfphi_p}  - \scalar{\bfq,\overline{\bfg_p}\hspace{0pt}^* \bfphi_p + \mcR^* \overline{\bff_p}\hspace{0pt}^*  \phi_{u,n}} \ dt &= \int_{I_{m,n}} \scalar{\bfq,\bfpsi_p} \ dt.
\end{align}
Note the use of $\phi_{u,n}$, which is known from the time interval
$I_{n+1}$.  Then find $\phi_u(x,t) \in \mcL^2(I_n,\mcH^1(\Omega))$ such that
\begin{align}
  \label{adjointU2}
  \int_{I_n} (v,-\dotphi_u) + a(v,\phi_u)  - (v,\overline{f_u} \hspace{0pt}^* \phi_u + \mcP^* \overline{g_u}\hspace{0pt}^* \bfphi_p) \ dt &= \int_{I_n} (v,\psi_u) \ dt.
\end{align}
The corresponding iterative method follows.

\begin{theorem}[Error representation formula]\label{lem:errrep}
  Let the quantity of interest be the linear functional $m(u,\bfp)$
  be defined by the functions $\psi_u\in \mcL^2(\Omega)$ and
  $\bfpsi_p \in [\mcL^2(\Omega)]^{\Nr}$ such that $m(u,\bfp) = \int_0^T
  (u,\psi_u) + \scalar{ \bfp,\bfpsi_p } \ dt$. The error representation
  formula for the error $E(U,\bfP) = \abs{ m(u,\bfp) - m(U,\bfP)}$ now
  reads
  \begin{align}
    \label{errorformula2}
    E(U,\bfP)
    &=  \Big| \sum_{n=1}^N \int_{I_n} (e_u,\psi_u) + \scalar{\bfe_p,\bfpsi_p} \ dt \Big| \\
    &= \Big| I + \sum_{n=1}^N ( II_n + III_n + IV_n + V_n + V\!I_n) \Big|,
  \end{align}
  where
  \begin{align}
    \label{errorI}
    I &=  (u^0-\Pi_0 u^0,\phi_{u,0}), \\
    \label{errorII}
    II_n &= \sum_{K\in \mcT_n^h} \int_{I_n} (R_{u,K}(U,\bfP), \phi_u - \Pi_n \pi_n^u \phi_u)_K \ dt, \\
    \label{errorIII}
    III_n &= \sum_{m=1}^{M_n} \int_{I_{m,n}} \scalar{\bfR_p(\bfP), \bfphi_p - \bfpi_{m,n}^p \bfphi_p} \ dt, \\
    \label{errorIV}
    IV_n &= \int_{I_n} (f(U,\mcR \bfP) - f(U,\widetilde{\mcR}\bfP), \phi_u) \ dt, \\
    \label{errorV}
    V_n &=  \int_{I_n} \scalar{ \bfg(\mcP U,\bfP) - \bfg(\mcP U_{n-1},\bfP), \bfphi_p } \ dt, \\
    \label{errorVI}
    V\!I_n &= \int_{I_n} (\overline{\bff_p} \mcR \bfe_p, \phi_u - \phi_{u,n} ) \ dt.
  \end{align}
\end{theorem}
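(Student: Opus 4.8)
The plan is to mimic the proof of Theorem~\ref{lem:errrep_fi} essentially step for step; the forward scheme is the same in both theorems, so the \emph{only} structural change is the asymmetry created by the lagged PDE adjoint $\phi_{u,n}$ appearing in the ODE adjoint equation~\eqref{adjointP2}, and it is precisely this mismatch that produces the extra term $V\!I_n$.

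First I would introduce $e_u = u-U$ and $\bfe_p = \bfp-\bfP$ and record, unchanged, the chain-rule identities \eqref{crid_1}--\eqref{crid_2}, the jump relation \eqref{eq:err_p_jump} for $\bfe_p$, and its continuity-based analogue for $e_u$. Then, following the passage leading to \eqref{eq:err_ep}--\eqref{eq:err_eu}, I would substitute $\bfq = \bfe_p$ in \eqref{adjointP2} and $v = e_u$ in \eqref{adjointU2}, integrate by parts in time, and apply the jump relations. The sole difference from the fully implicit derivation is that the $\bfe_p$-equation now produces the coupling term $\scalar{\bfe_p,\mcR^*\overline{\bff_p}\hspace{0pt}^*\phi_{u,n}} = (\overline{\bff_p}\mcR\bfe_p,\phi_{u,n})$ with the \emph{lagged} adjoint $\phi_{u,n}$ in place of $(\overline{\bff_p}\mcR\bfe_p,\phi_u)$.

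Next I would sum the $\bfe_p$-equation over $m = 1,\dots,M_n$, add the $e_u$-equation, and collapse the $\bfg$-coupling contributions by \eqref{crid_1} into $\scalar{\bfg(\mcP u,\bfp)-\bfg(\mcP U,\bfP),\bfphi_p}$ exactly as before (no mismatch occurs here, since \eqref{adjointP2} and \eqref{adjointU2} both use $\bfphi_p$). For the $f$-coupling terms I now have $-(\overline{f_u}e_u,\phi_u)$ from the $e_u$-equation and $-(\overline{\bff_p}\mcR\bfe_p,\phi_{u,n})$ from the $\bfe_p$-equation, which I would regroup as
\[
-(\overline{f_u}e_u,\phi_u)-(\overline{\bff_p}\mcR\bfe_p,\phi_{u,n})
=-(\overline{f_u}e_u+\overline{\bff_p}\mcR\bfe_p,\phi_u)+(\overline{\bff_p}\mcR\bfe_p,\phi_u-\phi_{u,n});
\]
applying \eqref{crid_2} to the first group gives $-(f(u,\mcR\bfp)-f(U,\mcR\bfP),\phi_u)$ — the same clean structure as in the fully implicit case — while the leftover $(\overline{\bff_p}\mcR\bfe_p,\phi_u-\phi_{u,n})$, integrated over $I_n$ and summed over $n$, is exactly $\sum_n V\!I_n$.

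From here the argument is identical to that of Theorem~\ref{lem:errrep_fi}: substitute the exact equations \eqref{PDE} and \eqref{chemistry} and sum over $n$, so that the telescoped time-boundary terms reduce to the initial-data term $I$; add and subtract $\int_{I_n}(f(U,\widetilde{\mcR}\bfP),\phi_u)\,dt$ to split off $IV_n$ and $\int_{I_n}\scalar{\bfg(\mcP U_{n-1},\bfP),\bfphi_p}\,dt$ to split off $V_n$; integrate by parts in space on $a(U,\phi_u)$ to expose $\nabla\cdot\epsilon\nabla U$ and the interelement flux jumps; and finally invoke Galerkin orthogonality \eqref{Pdg}--\eqref{Udg} to subtract the interpolants $\Pi_n\pi_n^u\phi_u$ and $\bfpi_{m,n}^p\bfphi_p$, turning the PDE and ODE contributions into $II_n$ and $III_n$ expressed through the residuals $R_{u,K}$ and $\bfR_p$ of Definition~\ref{def:residuals}. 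The main thing to watch is the bookkeeping of $\phi_u$ versus $\phi_{u,n}$: \eqref{crid_2} must be applied with the current-interval adjoint $\phi_u$ so that $II_n$ and $IV_n$ come out paired with $\phi_u$ as stated, and the single residue of the explicit coupling is the clean term $V\!I_n$ (which, note, still contains the true error $\bfe_p$ and is therefore not directly computable); everything else is the routine computation already carried out in Theorem~\ref{lem:errrep_fi}.
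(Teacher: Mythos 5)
Your proposal is correct and follows essentially the same route as the paper: the paper's proof also reduces to the argument of Theorem~\ref{lem:errrep_fi} and handles the lagged adjoint by adding and subtracting $\int_{I_{m,n}}\scalar{\overline{\bff_p}\mcR\bfe_p,\phi_u}\,dt$, which is exactly your regrouping producing the extra term $V\!I_n$, with all remaining steps (exact equations, telescoping, splitting off $IV_n$ and $V_n$, and Galerkin orthogonality) identical.
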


The first five terms are similar to the error representation in
Lemma \ref{lem:errrep_fi}. The last term is the contribution of the
transfer error from the ODEs to the PDE through $f$, weighted by the
effect of the splitting of the adjoint equations.

\begin{proof}
  The proof proceeds as in the case of Theorem
  \ref{lem:errrep_fi}. However we have to account for the
  implicit-explicit solve of the adjoint, which shows up in the analogue of
  \eqref{eq:err_ep} as,
  \begin{equation}
    \label{eq:err_ep_ei}
    \begin{aligned}
      \int_{I_{m,n}} \scalar{\bfe_p,\bfpsi_p} \ dt &= -\scalar{\bfe_p^- ,\bfphi_p}_{m,n} + \scalar{\bfe_p^- ,\bfphi_p}_{m-1,n}  - \scalar{[\bfP]_{m-1,n},\bfphi_{p,(m-1,n)}}\\
      & \phantom{a} \quad + \int_{I_{m,n}} \scalar{\dotbfe_p ,\bfphi_p}  - \scalar{\overline{\bfg_p} \bfe_p , \bfphi_p} - \scalar{ \overline{\bff_p} \mcR \bfe_p , \phi_{u,n}} \ dt.
    \end{aligned}
  \end{equation}
  The presence of the term involving $\phi_{u,n}$ prevents use of the fact that $\bfp$ and $u$ satisfy \eqref{PDE} and
  \eqref{chemistry} . Thus, we add and subtract $\int_{I_{m,n}} \scalar{
    \overline{\bff_p} \mcR \bfe_p , \phi_{u}} \ dt$ to
  \eqref{eq:err_ep_ei}. The rest of the proof follows as before. The
  addition and subtraction of the additional term leads to the sixth
  term in the error representation formula.
\end{proof}

\subsection{Error indicators}

\begin{theorem}[Error indicators]\label{lem:errind}
  By decomposing the contributions to the discretization error of the PDE into a spatial and
  temporal parts as $II_n = II_n^x + II_n^t$, we can distinguish the
  contributions from the spatial discretization for the PDE as $E^x$, the temporal discretization
  of the PDE by $E^t$ and the ODE discretization contribution as $E^s$, so
  \begin{equation*}
  E(U,\bfP) \leq  E^x+E^t+E^s ,
 \end{equation*}
  where
  \begin{align}
    \label{Ex}
    E^x
    &= I + \sum_{n=1}^N (II_n^x + IV_n),\\
    \label{Et}
    E^t
    &= \sum_{n=1}^N  (II_n^t + V_n),\\
    \label{Eo}
    E^s
    &= \sum_{n=1}^N III_n.
  \end{align}
  Furthermore, these
  indicators can be approximated by the computable indicators $\AE^x$,
  $\AE^t$ and $\AE^s$ as
  \begin{align}
    \label{aex}
    \AE^x &= \sum_{K \in \mcT_0^h} \abs{ I_K} + \sum_{n=1}^N \sum_{K \in \mcT_n^h} \abs{ II_{n,K}^x }  + \sum_{n=1}^N \abs{ IV_n }, \\
    \label{aet}
    \AE^t &= \sum_{n=1}^N ( \abs{ II_n^t} + \abs{ V_n} ), \\
    \label{aeo}
    \AE^s &= \sum_{n=1}^N \sum_{m=1}^{M_n} \abs{ III_{m,n} },
  \end{align}
  where the subscripts indicate restriction. For these to be computable
  we replace $\phi_u$ and $\bfphi_p$ by the discrete approximations
  $\Phi_u$ and $\bfPhi_p$ respectively. This leads to additional terms in the representation that cannot be estimated but are relatively small in the limit of discretization refinement.
\end{theorem}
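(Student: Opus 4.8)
The plan is to derive the statement directly from the error representation identity of Theorem~\ref{lem:errrep}, $E(U,\bfP) = \abs{I + \sum_{n=1}^N (II_n + III_n + IV_n + V_n + V\!I_n)}$, by (i) splitting $II_n$ into spatial and temporal pieces, (ii) regrouping all terms according to which discretization parameter drives them, (iii) treating $V\!I_n$ as a higher-order remainder, and (iv) passing to computable quantities by the triangle inequality together with the replacement of the exact adjoint by a numerical one.

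For step~(i) I would exploit the commutativity $\Pi_n \pi_n^u = \pi_n^u \Pi_n$ noted after the definition of the projection operators. The adjoint weight in $II_n$ splits as $\phi_u - \Pi_n \pi_n^u \phi_u = (\phi_u - \Pi_n \phi_u) + \Pi_n(\phi_u - \pi_n^u \phi_u)$, the first factor being a purely spatial interpolation error and the second a temporal interpolation error (projected into $\mcV_n^h$). This yields the (non-canonical but natural) choice $II_n^x = \sum_{K\in\mcT_n^h} \int_{I_n} (R_{u,K}(U,\bfP),\, \phi_u - \Pi_n\phi_u)_K \, dt$ and $II_n^t = \sum_{K\in\mcT_n^h} \int_{I_n} (R_{u,K}(U,\bfP),\, \Pi_n(\phi_u - \pi_n^u\phi_u))_K \, dt$, so that $II_n = II_n^x + II_n^t$ by linearity of the residual pairing. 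Step~(ii) is then bookkeeping guided by interpretation: $I$ is the error in the spatial projection of the initial data and $IV_n$ is the spatial Monte-Carlo/recovery sampling error controlled by the coupling-scale geometry and the number of samples, so both are attributed to $E^x$ alongside $II_n^x$; the operator-splitting-in-time error $V_n$ joins $II_n^t$ in $E^t$; and $III_n$, which isolates the ODE time discretization, is $E^s$. Summing gives the exact identity $E^x + E^t + E^s = I + \sum_{n=1}^N (II_n + III_n + IV_n + V_n)$.

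For step~(iii) the leftover term $V\!I_n = \int_{I_n} (\overline{\bff_p}\,\mcR\bfe_p,\, \phi_u - \phi_{u,n}) \, dt$ is a product of the ODE-to-PDE transfer error $\overline{\bff_p}\,\mcR\bfe_p$ and the adjoint increment $\phi_u - \phi_{u,n}$; since $\phi_{u,n}$ is the endpoint value of $\phi_u$ carried over from $I_{n+1}$, regularity of the adjoint gives $\norm{\phi_u - \phi_{u,n}}_{I_n} = \mathcal{O}(\Delta t_n)$, while $\bfe_p \to 0$ under refinement, so $V\!I_n$ is of higher order relative to the retained contributions; dropping it (together with the understanding that the indicators are ultimately compared term by term in absolute value) produces the stated relation $E(U,\bfP) \le E^x + E^t + E^s$. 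Finally, in step~(iv) the triangle inequality applied to each of $E^x$, $E^t$, $E^s$ — summing absolute values over the elements $K$, the PDE time nodes $n$, and the ODE substeps $m$ — gives $\AE^x$, $\AE^t$, $\AE^s$; because $\phi_u$ and $\bfphi_p$ are not available exactly, they are then replaced by numerical adjoint solutions $\Phi_u$, $\bfPhi_p$ obtained from \eqref{adjointP2}--\eqref{adjointU2} (computed in a richer space, e.g.\ higher polynomial order or a refined mesh), which introduces the additional, non-estimable but asymptotically negligible terms mentioned at the end of the statement.

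The algebra — a regrouping of Theorem~\ref{lem:errrep} plus the elementary split of $II_n$ — is routine; the genuine difficulty is the two asymptotic claims, namely that $V\!I_n$ and the adjoint-approximation defect $\phi_u - \Phi_u$, $\bfphi_p - \bfPhi_p$ are ``relatively small in the limit of discretization refinement.'' Making these rigorous would require stability and regularity estimates for the implicit-explicit adjoint problem \eqref{adjointP2}--\eqref{adjointU2} (bounding the adjoint increment and the adjoint approximation error in terms of $h$, $\Delta t_n$, $\Delta s_{m,n}$ and the data $\psi_u$, $\bfpsi_p$), which the paper does not develop; I therefore expect this part of the argument to be carried out heuristically, with the rigorous core being the exact identity for $E^x + E^t + E^s$ and the term-by-term triangle inequality producing the computable $\AE^\bullet$.
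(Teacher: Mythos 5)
Your proposal is correct and takes essentially the same route as the paper: the same commutativity-based splitting of $II_n$ into $II_n^x$ and $II_n^t$, the same regrouping ($I$ and $IV_n$ into $E^x$, $V_n$ into $E^t$, $III_n$ into $E^s$) with $V\!I_n$ dropped as a higher-order remainder, and the same replacement of the exact adjoints by higher-order discrete approximations $\Phi_u$, $\bfPhi_p$ whose defect is treated heuristically, exactly as the paper does. The only detail you omit is that $IV_n$ also involves the non-computable exact recovery $\mcR\bfP$, which the paper makes computable by sampling at a large number of points to approximate $\mcR$ and using a subset of those samples for $\widetilde{\mcR}$.
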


\begin{proof}
  We have to account for the effect of replacing the exact adjoints $\phi_u$ and $\bfphi_p$
  with the approximations $\Phi_u \in \mcW^{r_u+1}_n$ and $\bfPhi_p \in
  \mcQ_{m,n}^{r_p+1}$ obtained by finite element discretizations similar
 to \eqref{Udg} and \eqref{Pdg}, but with $\bfq\in
  \mcQ_{m,n}^{r_p}$ and $v\in \mcW^{r_u}_n$. The effect on $E(U,\bfP)$ of replacing $\phi_u$ with
  $\Phi_u$ in   \eqref{errorI} -- \eqref{errorVI} leads to additional terms with weights of the type   $\phi_u-\Phi_u$. However, these additional terms are higher order. The same holds for replacing   $\bfphi_p$ with $\bfPhi_p$. Since these errors are negligible we do
  not indicate this change in the terms $I$ -- $V\!I$ below.

  To have separate indicators for the spatial and the temporal contributions from discretization of   the PDE and the contribution for the discretization of the ODEs, we  decompose the error
  $E(U,\bfP)$ into parts corresponding to these contributions. To distinguish
  the different contributions, we note that term $II$ \eqref{errorII}
  contributes both to errors in space as well as in time. To deal with this,
  we write
  \begin{align}
    \Phi_u - \Pi_n \pi_n^u \Phi_u = \Phi_u - \Pi_n \Phi_n + \Pi_n \Phi_n -
    \Pi_n \pi_n^u \Phi_u,
  \end{align}
  which gives $II_n = II_n^x + II_n^t$, where
  \begin{align}
    \label{IInx}
    II_n^x &= \sum_{K \in \mcT_n^h} \int_{I_n} (R_{u,K}(U,\bfP),\Phi_u - \Pi_n \Phi_u)_K \ dt, \\
    \label{IInt}
    II_n^t &= \sum_{K \in \mcT_n^h} \int_{I_n} (R_{u,K}(U,\bfP),\Pi_n \Phi_u - \pi_n^u \Pi_n \Phi_u)_K \ dt,
  \end{align}
  where $R_{u,K}$ is the algebraic equivalent to \eqref{resu}, since
  $\Pi_n \Phi_u \in \mcV_n^h$.

Term $IV_n$ involve the exact quantity $\mcR$, which is not
computable. We compute an approximation of this term by sampling at a large number of points
to form $\mcR$ and then choose only a subset of these samples to form $\widetilde{\mcR}$.
\end{proof}

Finally, the Term $V\! I_n$ involving $\bfe_p$, which arises from the choice of a computationally-tractable adjoint problem, is not directly computable.  In some cases, such terms can be approximated at the cost of solving auxiliary adjoint
problems~\cite{CET09}. Alternatively, a rigorous mathematical analysis showing such
terms are small relative to the computable terms in the error estimate is
carried out in~\cite{CEGT12a}.  Intuitively $V\! I_n$ is small in the limit of discretization refinement because involves a product of the errors $\bfe_p$ and
$\phi_u-\phi_{u,n}$. On the other hand, if $V\! I_n$ is large, we
expect the other terms in the error estimate to be large as
well. Hence, while the error estimate may not capture the true error
accurately in this case, it is still be reliable in the sense of
indicating a large error in the numerical solution.

\section{Details of the discretization and the adaptive algorithm}

\subsection{Discretization details}

For discretization in space of the PDE, the partition $\mcT_n^h$ of $\Omega$ consists of hexahedral elements
with trilinear basis functions for the primal PDE problem and
triquadratic basis functions for its adjoint. The capability for spatial mesh adaptivity, including refinement and derefinement, is provided by the use of hanging nodes, where at
most one hanging node per edge or face is allowed. Conformity of the
basis at the hanging nodes is obtained by interpolation using the
neighboring nodes, see \cite{Ainsworth}. Handling of such shape
regular but non-uniform meshes is alleviated by an octree-based data
structure \cite{frisken2005simple}.

For the stochastic discretization of the microscale ODEs, the Voronoi cells $\omega_j$ are generated by $\No$ uniformly distributed random points in $\Omega$. Given these points, the
corresponding Voronoi tessellation is generated by calling the
\textit{Voro++} library \cite{Rycroft}.  Since this tessellation is
generally \emph{not} aligned with $\mcT_n^h$ (as can be seen in Fig.
\ref{fig:vorocube}) a map from each quadrature point in each
$\mcT_n^h$ to the corresponding Voronoi cell is constructed. This
procedure requires searching the Voronoi diagram, but the library
supplies efficient routines for doing this. Moreover, $\mcT_n^h$ only
varies when the mesh is changed, which happens fairly infrequently in the block adaptivity approach that we use (described in the next section). We
sample by using 10 random points in each $\omega_j$ to compute $\mcR
P$ in Term IV (cf.\ \eqref{errorIV}) and sample using 1 point to
compute \eqref{MCrecovery1} or \eqref{MCrecovery2}.

Computing the projection $\mcP$ is performed using the same data structures created for $\mcR$: We sample 10 points in each $\omega_j$, and for
each of these points, we find the corresponding element in $\mcT_n^h$
and compute the function value at that point. Then we
average over these function values to get the projection over
$\omega_j$.

The temporal discretization of the primal and adjoint PDE problem is
performed using a dG(0) and a cG(1) respectively. For the ODEs, a
dG(1) method is used for the primal and a dG(2) method for its adjoint.
The ODEs are initially solved on the same temporal discretization as
the PDE. However, the ODEs are allowed to have individual time steps
in each slab $K\times I_n$ (see Section \ref{sect:fem}). To determine
these time steps, as well as the discretization of the PDE, we use
blockwise adaptivity \cite{blockadap}, which is a more efficient
procedure than a standard compute -- estimate -- mark -- refine
strategy. This is described in the following section. Finally, the adjoint time discretization is set to be equal to the temporal discretization for the forward problem.

To be able to represent data on the different grids, the octree data
structure facilitates the projection and interpolation routines that
are necessary. To alleviate the matrix allocations,
assembly of the matrices and vectors as well as solving the linear
systems involved we take use of the \textit{PETSc} library
\cite{petsc}. We use its conjugated gradient method with the
incomplete LU factorization as preconditioner. A Newton method is used
for performing the nonlinear iterations for the primal PDE and
ODEs.

Finally, despite the higher order method used for the adjoints, the costs
can be compared with those for the forward problem due to the fact
that they are linear.

\subsection{Blockwise adaptive error control}
\label{sect:adaptivealg}

To determine spatial and temporal refinement, the adaptive algorithm
called blockwise adaptivity is used. This procedure, introduced by
Carey et.~al.~in \cite{blockadap}, is built upon the creation of a
sequence of blocks of meshes. The meshes that constitute the block are
carefully created to guarantee that the numerical method is accurate
in terms of the goal functional as well as being efficient in the
sense that they are as coarse as possible.

The blocks form the partition in time by $0 = T_0 < T_1 < \cdots < T_B
= T$. In each block, both the spatial and temporal meshes are fixed
but possibly non-uniform. Given initial primal and adjoint solutions on a
coarse discretization, the algorithm is based on an absolute tolerance
$\ATOL$ for the total error, the Principle of Equidistribution and a
strategy for computing blocks. The Principle of Equidistribution says
that an optimal mesh for controlling $\ATOL$ is obtained when the
element contributions are approximately equal, see e.g.~Eriksson
et.~al.~\cite{actaintro}.

The strategy may be based on a criteria, for example the maximum size
of a mesh due to limitations in computer memory. This strategy, known
as a memory-bound strategy, is employed in this paper. Other
strategies include forming blocks by considering changes in the
topology of the meshes. For the memory-bound strategy we place upper
bounds on the number of degrees of freedom, such that there are
\begin{itemize} 
\item a maximum number of spatial elements $\xMAX$,
\item a maximum number of PDE time intervals $\tMAX$,
\item a maximum number of ODE time intervals $\sMAX$.
\end{itemize}
Each of this restrictions may define end time of blocks. For
completeness we briefly present this method, but is thoroughly
described in \cite{blockadap}.

Assuming that in a mesh there are $N_x$ space elements, $N$ macro time
intervals and $M_n$ subintervals in each macro interval, there is a
total of $N_x N M_n$ space-time subslabs in the initial
discretization.  The Principle of Equidistribution determines an
approximate local error tolerance on each subslab of
\begin{align}
  \LATOL = \frac{\ATOL}{3N_x N M_n},
\end{align}
since there are three different contributions to the error which
should be equal: The spatial and temporal part of the PDE and the ODE.

The next step is to predict meshes such that each element contribution
is approximately $\LATOL$. For example, standard a priori analysis may
show that the error $E^s|_{I_{m,n}}$ on a subinterval of size $\Delta
s$ is of the order of $(\Delta s)^{r}$ for some $r$. Due to the
Principle of Equidistribution, the desired error should be
$\LATOL$. If this requires a subinterval of size $\Delta s_{new}$ we
have by proportionality
\begin{align}
  \LATOL \approx E^s|_{I_{m,n}} \times \left( \frac{\Delta s_{new}}{\Delta s},
  \right)^{r},
\end{align}
which results in that the approximate subinterval length can be
determined as
\begin{align}
  \Delta s_{\new} \approx \Delta s \times  \left( \frac{\LATOL}{E^s|_{I_{m,n}}} \right)^{1/r},
\end{align}
or as a number of new subintervals by
\begin{align}
  \label{timepredict}
  \frac{\Delta s}{\Delta s_{\new}} = \left( \frac{E^s|_{I_{m,n}}}{\LATOL} \right)^{1/r}.
\end{align}
It should be noted that if the original discretization is finer than
needed, then $\Delta s_{\new} > \Delta s$, which suggests
coarsening. Moreover, $\Delta s_{\new}$ is in fact a computable
quantity, since $E^s|_{I_{m,n}} \approx \AE^s|_{I_{m,n}}$
(cf.~equations \eqref{Eo} and \eqref{aeo}). For the space and
discretizations of the PDE, the procedure is similar and uses the
approximations $\AE^x$ and $\AE^t$ defined in \eqref{aex} and
\eqref{aet}.

Using the memory bound strategy, we determine that the predicted
number of elements that are of interest. Thus, besides
\eqref{timepredict} we also need to predict the number of elements in
space and time for the PDE. The latter is done using a similar
quotient as \eqref{timepredict}, but for the former, there is a
scaling factor with the dimension $d=3$ as
\begin{align}
  \label{npredict}
  \left(\frac{h}{h_{new}}\right)^d \approx \left( \frac{\AE^x|_{n,K}}{\LATOL} \right)^{d/r},
\end{align}
where $r$ is the order of convergence in space.

Using the predicted number of spatial elements \eqref{npredict}, one
can create a block by starting at $T_0 = 0$, loop over time and add up
the total number of predicted number of elements until the sum is
about equal to $\xMAX$ or $\tMAX$. Then this time defines $T_1$. If
the maximum predicted number of ODE steps over all spatial elements
exceeds $\sMAX$, that PDE time interval is to be refined, thus
increasing the predicted number of PDE time intervals. The second
block starts with $T_1$ and is formed analogously.

Derefinement is restricted to the resolution of the initial coarse
mesh, both is space and time. In addition to the three restriction
criteria above, a sufficiently coarse predicted mesh may also define
the end of a block. This can be determined by a parameter $\theta$
defined as
\begin{align}
  \label{eq:theta}
  \theta = \frac{\#\text{elements in current block}}{\#\text{elements predicted for the next block}}.
\end{align}
We coarsen the mesh when $\theta > 10$.

We note there can be significant contributions to the error if the meshes on adjacent blocks are sufficiently different \cite{blockadap}.  We do not include expressions estimating these contributions in the a posteriori error estimate. Rather, we reduce the impact of such contributions by using the maximum error estimate in the final time interval of the previous block to  determine the refinements of the first time interval on the next block.

Below, we use one iteration of the block adaptive algorithm, so  the PDE-ODE system is solved once
on the initial coarse mesh. This initial mesh is then refined in space and time
according to the strategy described above with $\ATOL\approx 10\%$ of
the approximate total error estimate $\AE^x + \AE^t + \AE^s$ for the
initial mesh.

\section{A numerical example}

We consider a problem from
electrocardiography  known as the monodomain
model (cf.~\cite{collifranzone}), which is \eqref{PDE} where
$u$ is the transmembrane potential, i.e.~the potential difference over
the cell membrane and $f$ is a current of charged ions. We assume
$\Omega=[0,1]^3$, $T=400$ ms, and a constant conductivity $\epsilon=0.1$. The
initial value $u^0$ is set such that there is an excited region close
to the origin, which decreases to a resting value using smoothed
Heaviside functions $G_\delta$ as,
\begin{align}
  u^0(r) = v_1(1-G_\delta(r))+v_0G_\delta(r),
\end{align}
where $r=\abs{x}-r_0$ and,
\begin{align}
  G_\delta(x) =
  \begin{cases}
    1, & x > \delta, \\
    \displaystyle \frac{1}{2} \left(
    1+\frac{x}{\delta} + \frac{1}{\pi} \sin \frac{\pi x}{\delta}
    \right), &  \abs{x} \leq \delta, \\
    0, &  x < -\delta,
  \end{cases}
\end{align}
with $v_0=-84.624$ mV, $v_1=20$ mV, $r_0=0.5$ and $\delta=0.2$.

The monodomain model is coupled to an ODE model called the
Beeler-Reuter model, a common model for describing mammalian
ventricular action potentials, i.e., the voltage response of a
specific type of heart muscle cells. The model is described in full in
Beeler and Reuter's original paper \cite{beelerreuter} and we review
it briefly here for completeness.

The model contains seven ODE variables, where six describe gates
governing the inflow and outflow of certain ions through the cell
membrane. These gating variables are modeled on the assumptions by
Hodgkin and Huxley saying that the openness depends on the proportion
of already open channels, $\bfp_i$, and the proportion of closed
channels, $1-\bfp_i$. Thus, $\bfp_i\in [0,1]$,
$i=1,\ldots,6$. Moreover, the level of openness is in turn depending
on the potential by rate functions $\alpha_i(u)$ and $\beta_i(u)$ to
form
\begin{align}
  \dot{\bfp}_i &= \alpha_i(u) (1-\bfp_i) - \beta_i(u) \bfp_i, \quad i = 1,\ldots,6.
\end{align}
Three of the gating variables are fast and thus sensitive to changes
in the potential, the other three are slower. See the seminal paper by
Hodgkin and Huxley \cite{hodhux} for a thorough analysis of the
modeling of the gating variables. Beeler and Reuter uses the following
general expression for the rate functions
\begin{align}
  \frac{c_1 \exp(c_2(u+c_3)) + c_4(u+c_5)}{\exp(c_6(u+c_3)) + c_7},
\end{align}
and the values for the constants $c_k$, $k=1,\ldots,7$, can be found
in \cite{beelerreuter}.

The seventh ODE variable models the concentration of calcium ions
inside the cell, and is governed by the ODE
\begin{align}
  \dot{\bfp}_7 &= -a_0 j_S(u,\bfp) - b_0(a_0-\bfp_7),
\end{align}
where $a_0$ and $b_0$ are constants and $j_S$ is a current depending
on the potential $u$ and the gating variables. The coupling to the PDE
is through the ion current $f=j_{ion}$ which is sum of different
currents,
\begin{align}
  j_{ion}(u, \bfp) = -(j_{Na}(\mcP u,\bfp) + j_S(\mcP u, \bfp) + j_{X}(\mcP u,\bfp) + j_{K}(\mcP u)).
\end{align}
Similar to $j_S$, the currents $j_{Na}$ and $j_{X}$ depend on the
potential $u$ and the gating variables. The current $j_{K}$ depend
only on the potential $u$.

We are interested in controlling the average error in the PDE solution, so
we set $\psi_u=1$ and $\bfpsi_p=\bfzero$.

For the initial temporal discretization of the PDE, a time step of
$\Delta t = 0.1$ is chosen up to $t=10$, after which point we use
$\Delta t=1$. The total initial total number of time steps is thus
490. The motivation for this initial non-uniform discretization is
that it is known a priori that the dynamics is rapid in the
beginning of the front. We use $L_n=2$ for all $n$.

To form the initial spatial mesh, $\Omega$ is uniformly subdivided
into $16^3=4096$ elements. The adjoint PDE problem is solved on this
mesh resolution for all times, even though the primal mesh is
refined. Moreover we set $\No=75000$.

The parameters for blockwise adaptivity are set to be $\xMAX = 75000$
(i.e.\ equal to $\No$), $\tMAX=1000$ and $\sMAX=20$.

\subsection{Results}
\label{sect:results}

Fig. \ref{fig:pdetimes1-3dplot} illustrates $U$ and $\Phi_u$ at
$t=0.5$ ms on the refined mesh. It is clear how the adjoint is large at
the front. For later times, after wave has passed over the domain, the
solution have little variation in space as shown in Fig.
\ref{fig:pdetimes3dplot-later}. The adjoint also has little variation in
space.

\begin{figure}[htb]
  \begin{center}
    \includegraphics[scale=0.18]{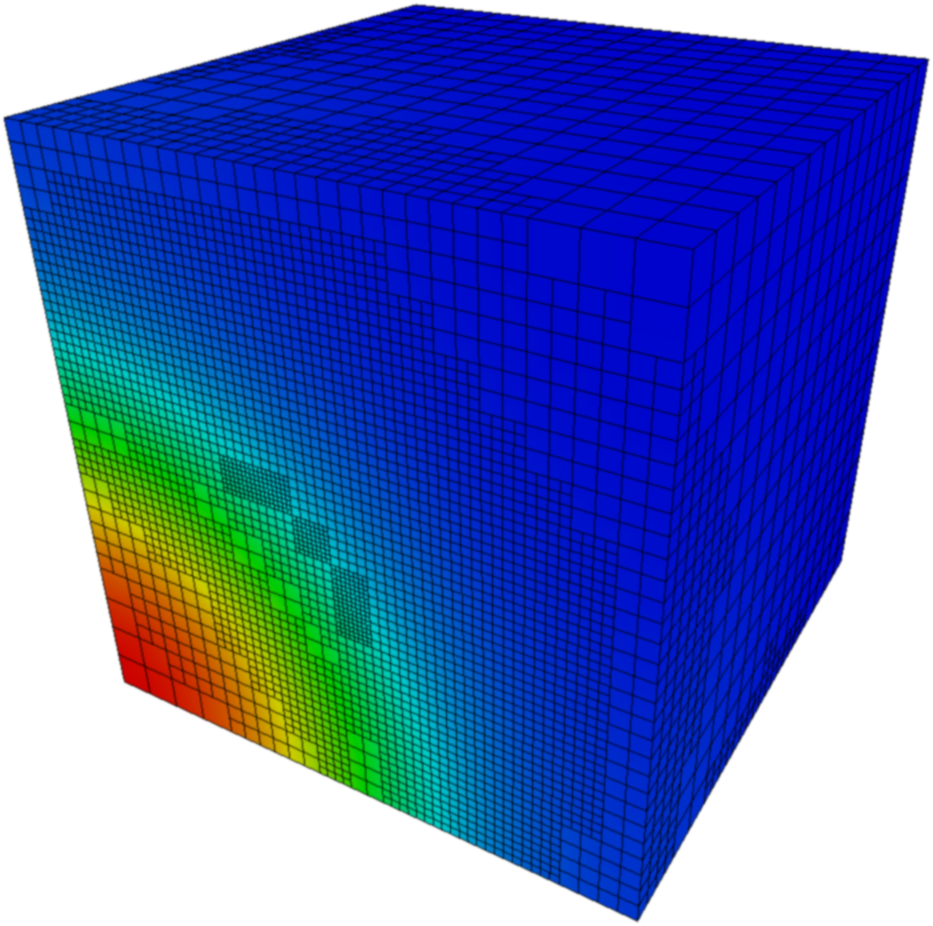} \qquad
    \includegraphics[scale=0.18]{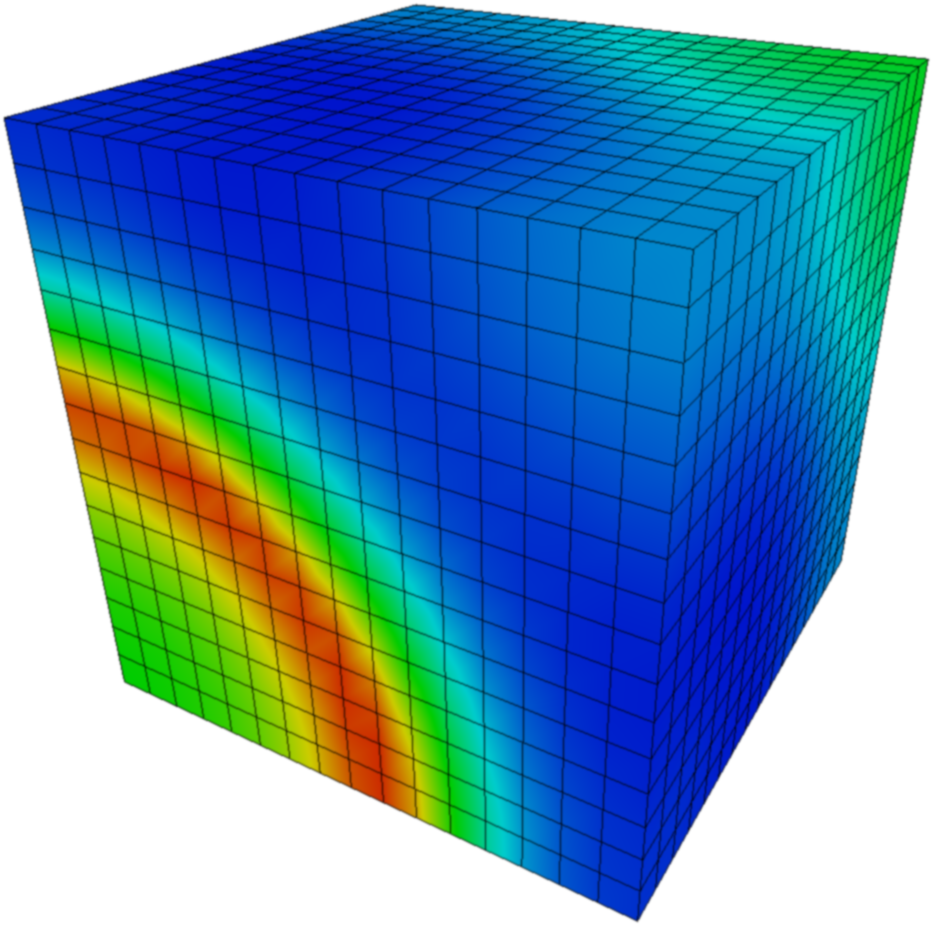}
    \caption{The primal (left) and adjoint (right) PDE solutions at time
      0.5 ms (block no.~1).}
    \label{fig:pdetimes1-3dplot}
  \end{center}
\end{figure}

\begin{figure}[htb]
  \begin{center}
    \includegraphics[scale=0.18]{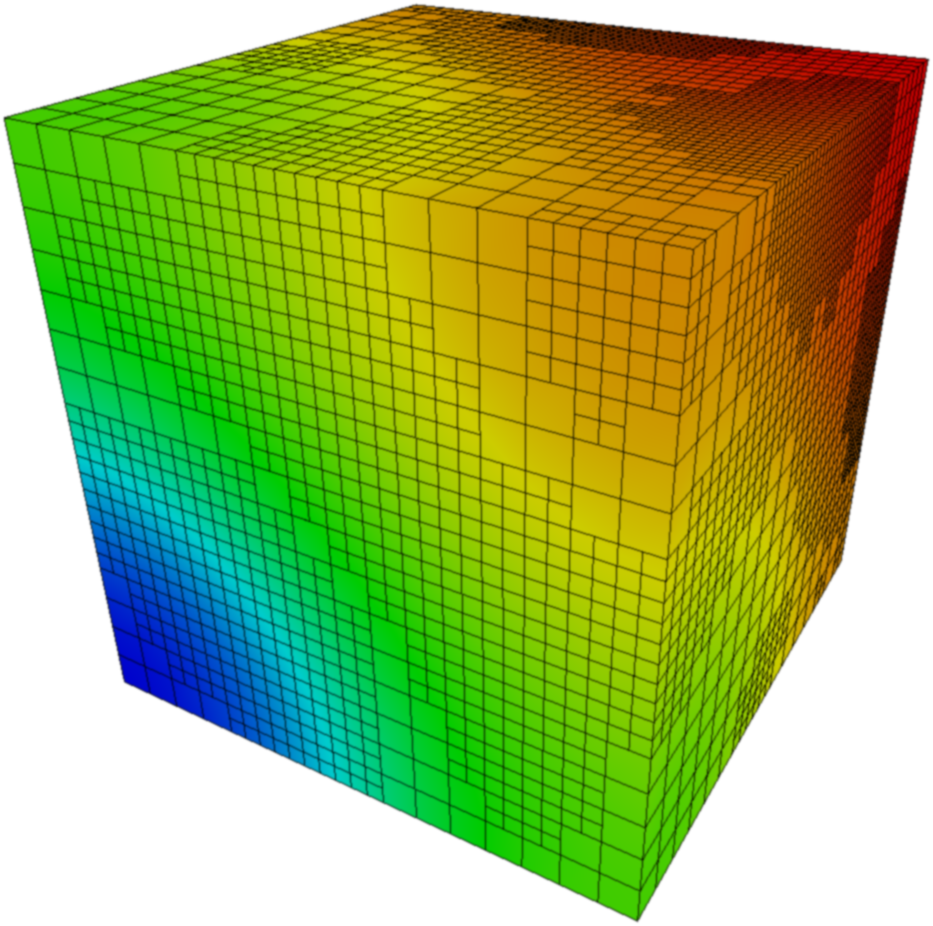} \qquad
    \includegraphics[scale=0.18]{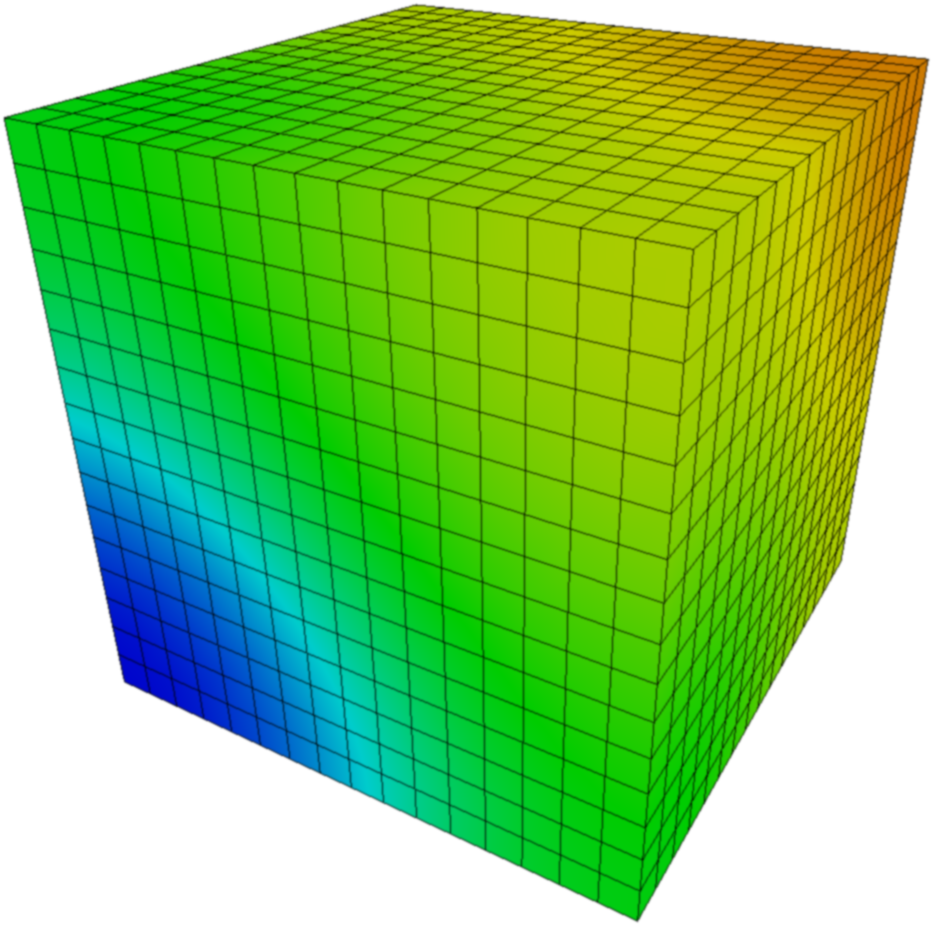}
    \caption{The primal (left) and adjoint (right) PDE solutions at time
      200 ms (block no.~5). The range of the primal solution is
      $-11.76$ to $-11.70$ mV and for the adjoint solution $1.405\cdot 10^4$ to
      $1.406\cdot 10^4$. }
    \label{fig:pdetimes3dplot-later}
  \end{center}
\end{figure}

The dynamics of the PDE and its adjoint are clearly visible in Fig.
\ref{fig:primaldualPDEplot}, which illustrates the solutions in the
center of $\Omega$, i.e.\ at $(0.5,0.5,0.5)$. The left graph shows
that there is a very steep gradient of $U$ in the first 5 ms of the
simulation after which the solution then decays fairly smoothly back
to its initial and resting value after about 300 ms. The adjoint solution
$\Phi_u$ in the right plot of Fig.~\ref{fig:primaldualPDEplot} has a
large peak around 240 ms. This may seem surprising at first, but its
explanation can be found by looking at the dynamics of the coupled
ODEs in Figures \ref{fig:ODEprimaldualfast},
\ref{fig:ODEprimaldualcalcium} and \ref{fig:ODEprimaldualslow}.

\begin{figure}[tb]
  \begin{center}
    \includegraphics[height=4.5cm]{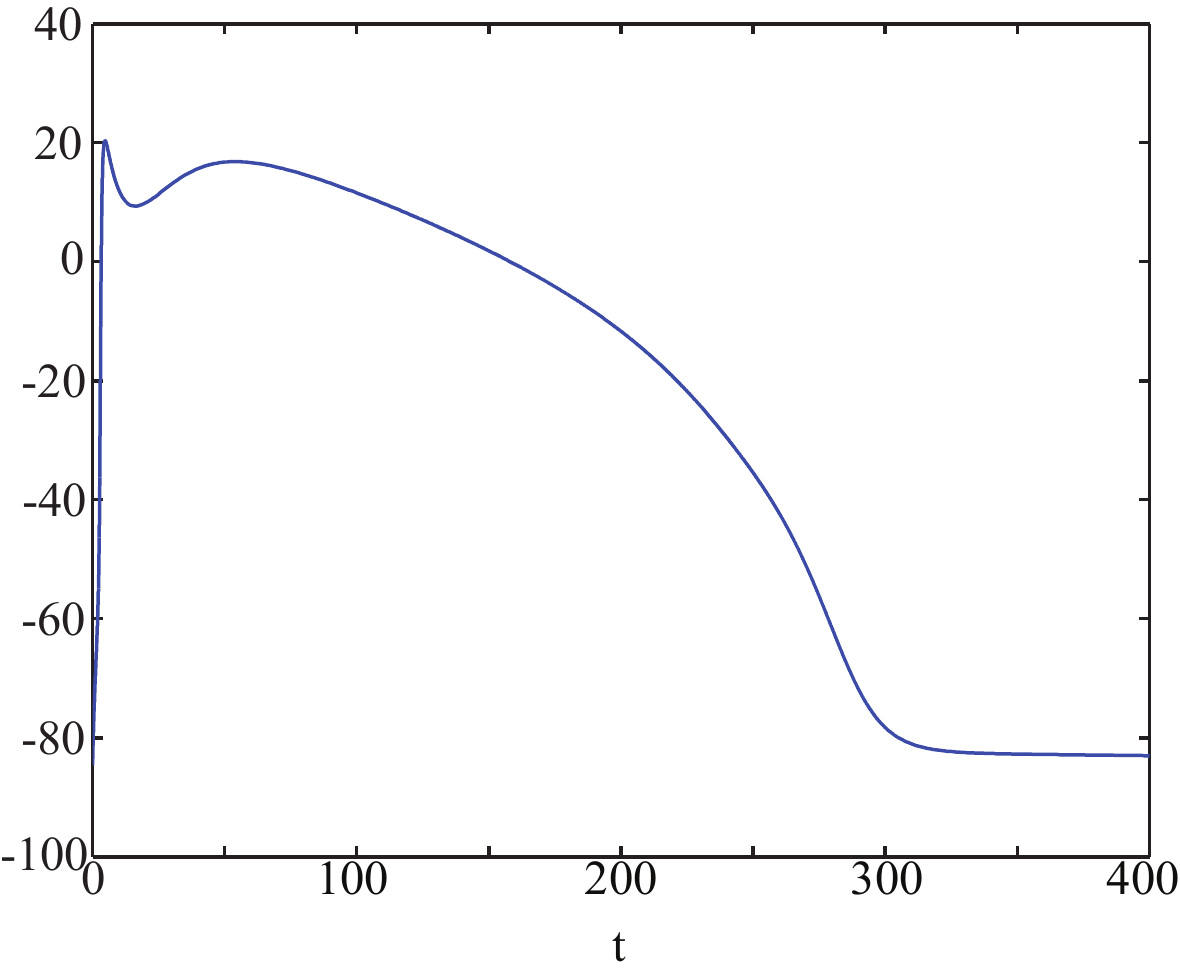}\qquad
    \includegraphics[height=4.5cm]{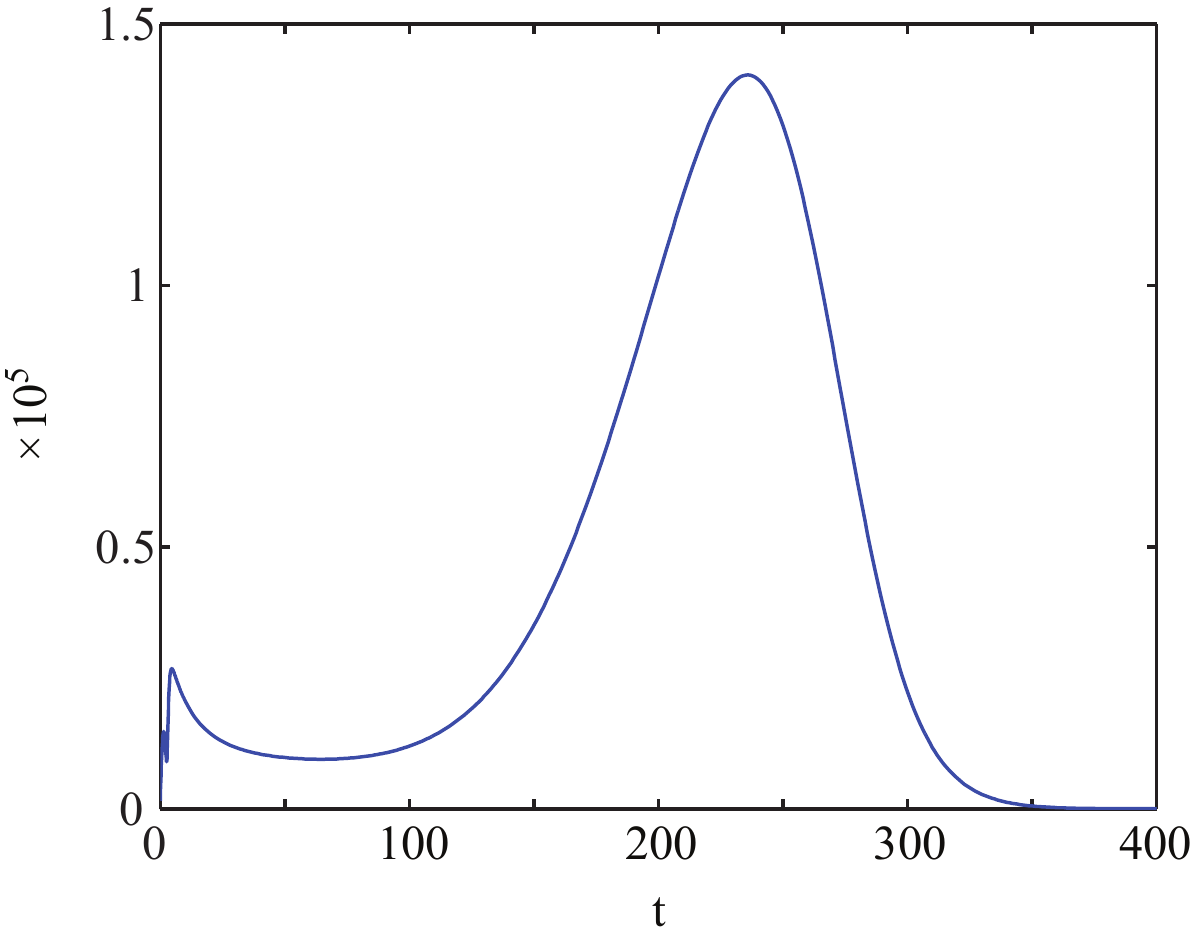}
    \caption{Solutions of the primal (left) and adjoint (right) PDE
      measured in the center of $\Omega$. }
    \label{fig:primaldualPDEplot}
  \end{center}
\end{figure}

\begin{figure}
  \begin{center}
    \includegraphics[height=4.5cm]{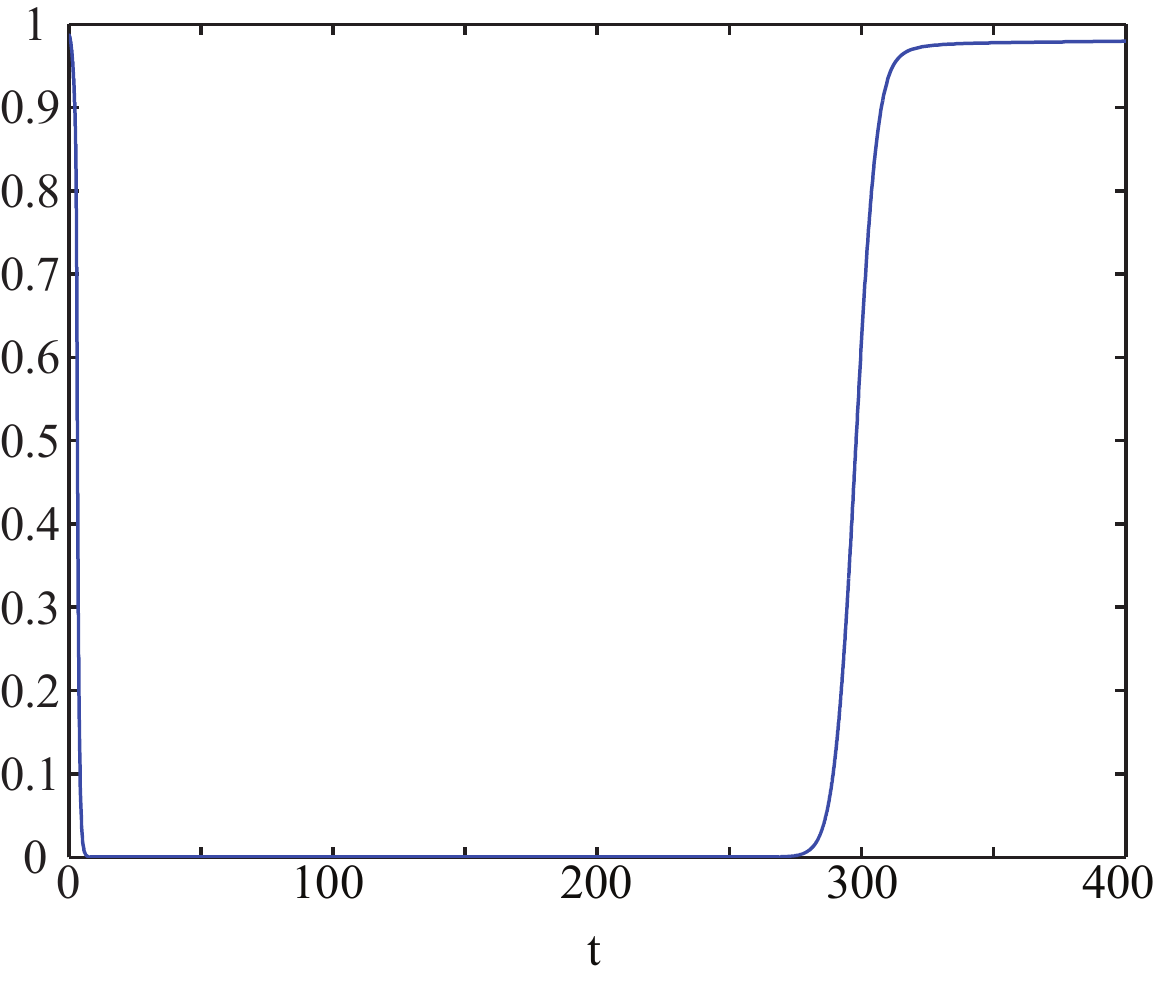} \qquad
    \includegraphics[height=4.5cm]{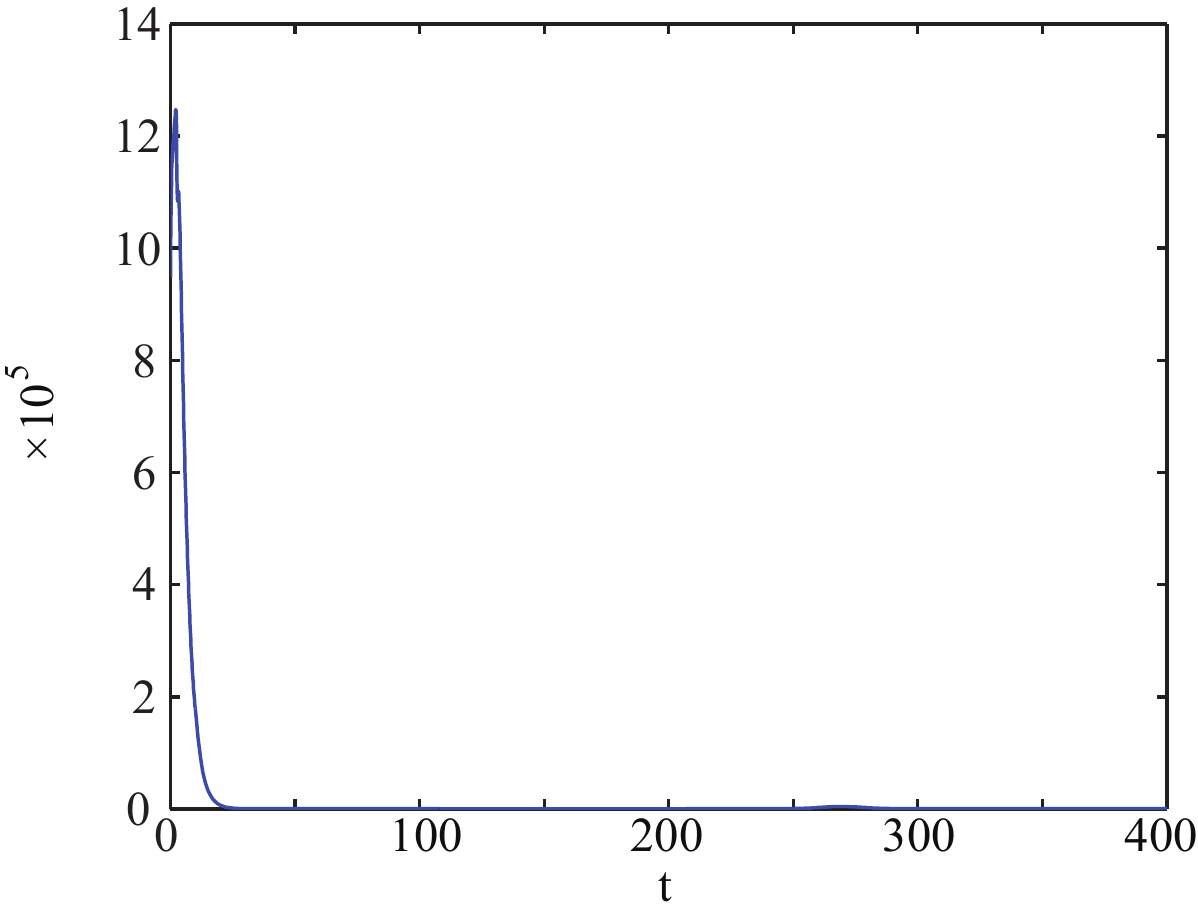}\\
    \includegraphics[height=4.5cm]{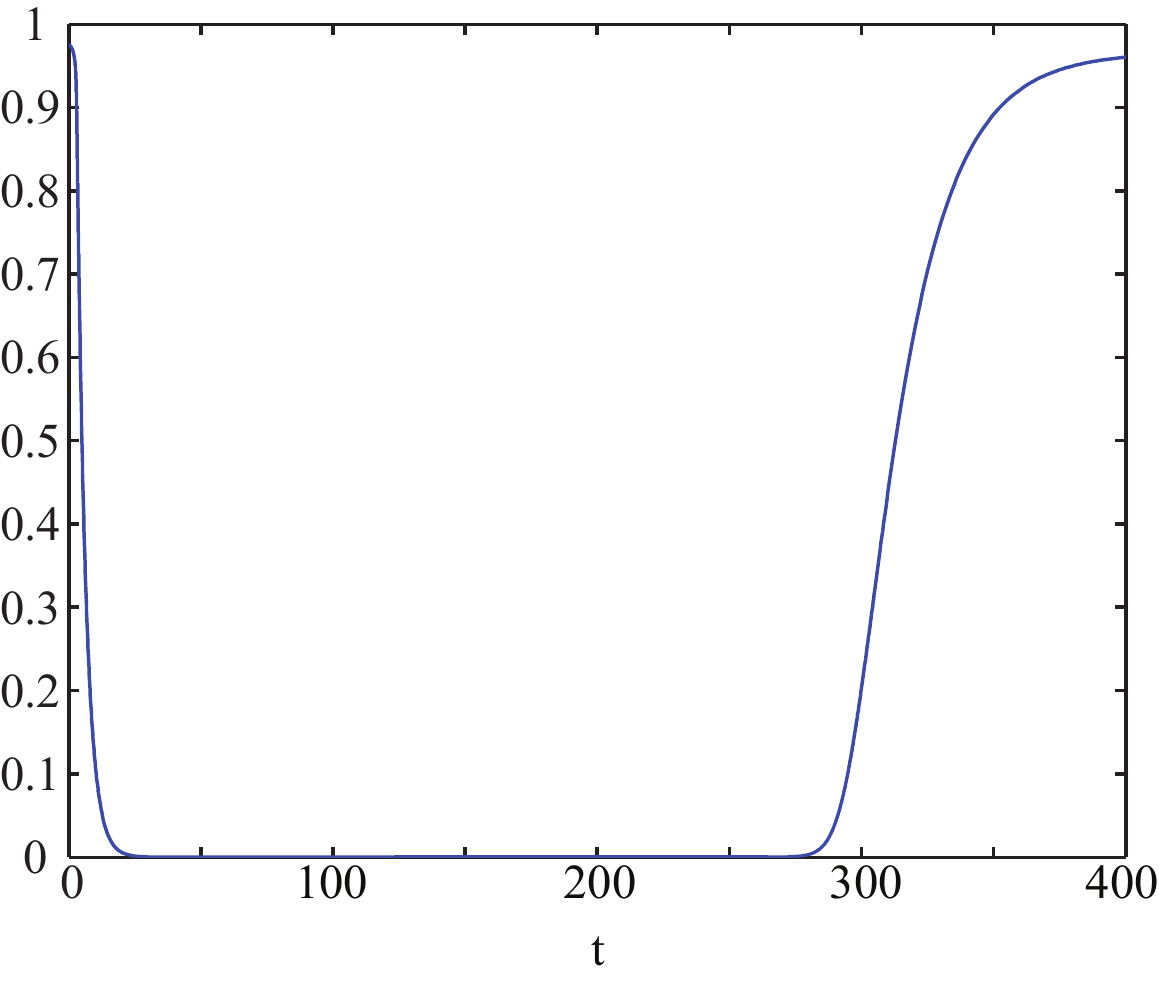} \qquad
    \includegraphics[height=4.5cm]{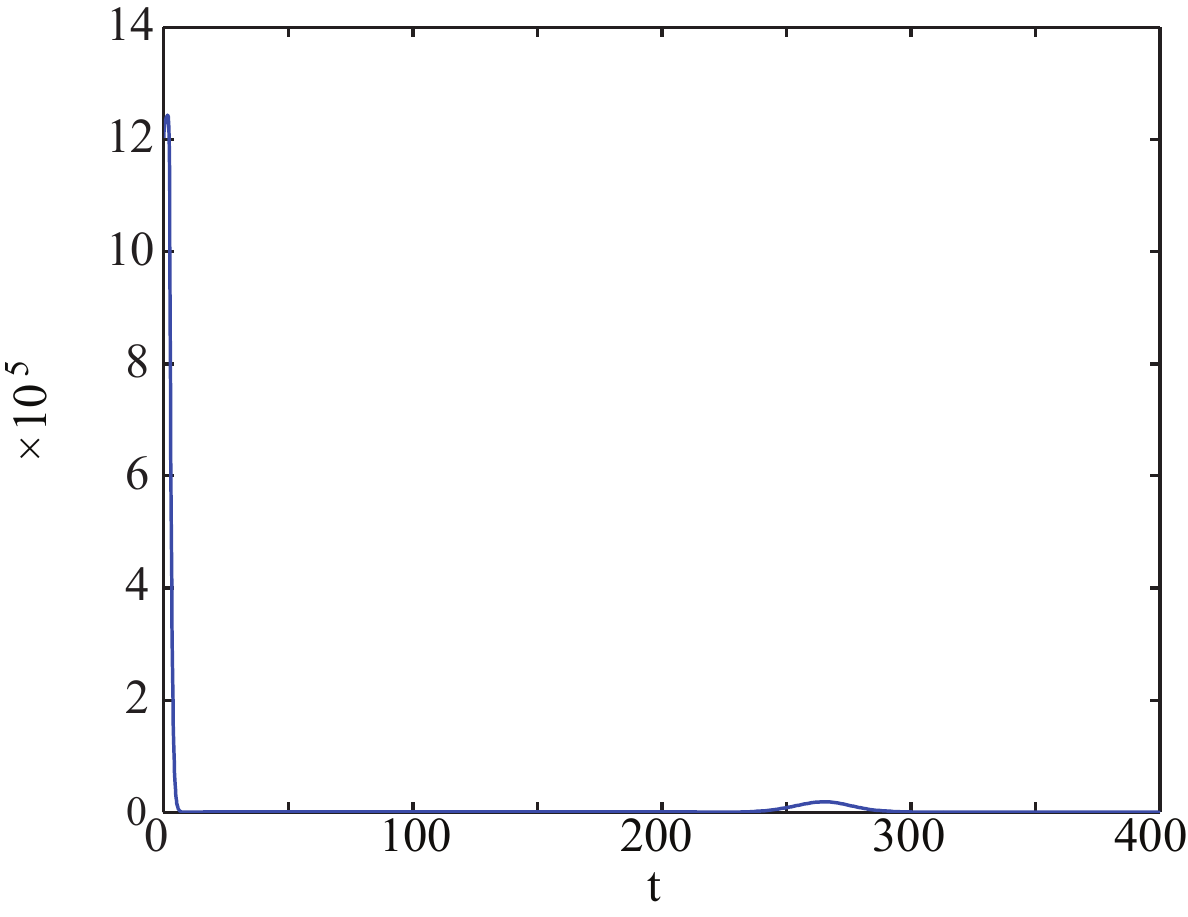}\\
    \includegraphics[height=4.5cm]{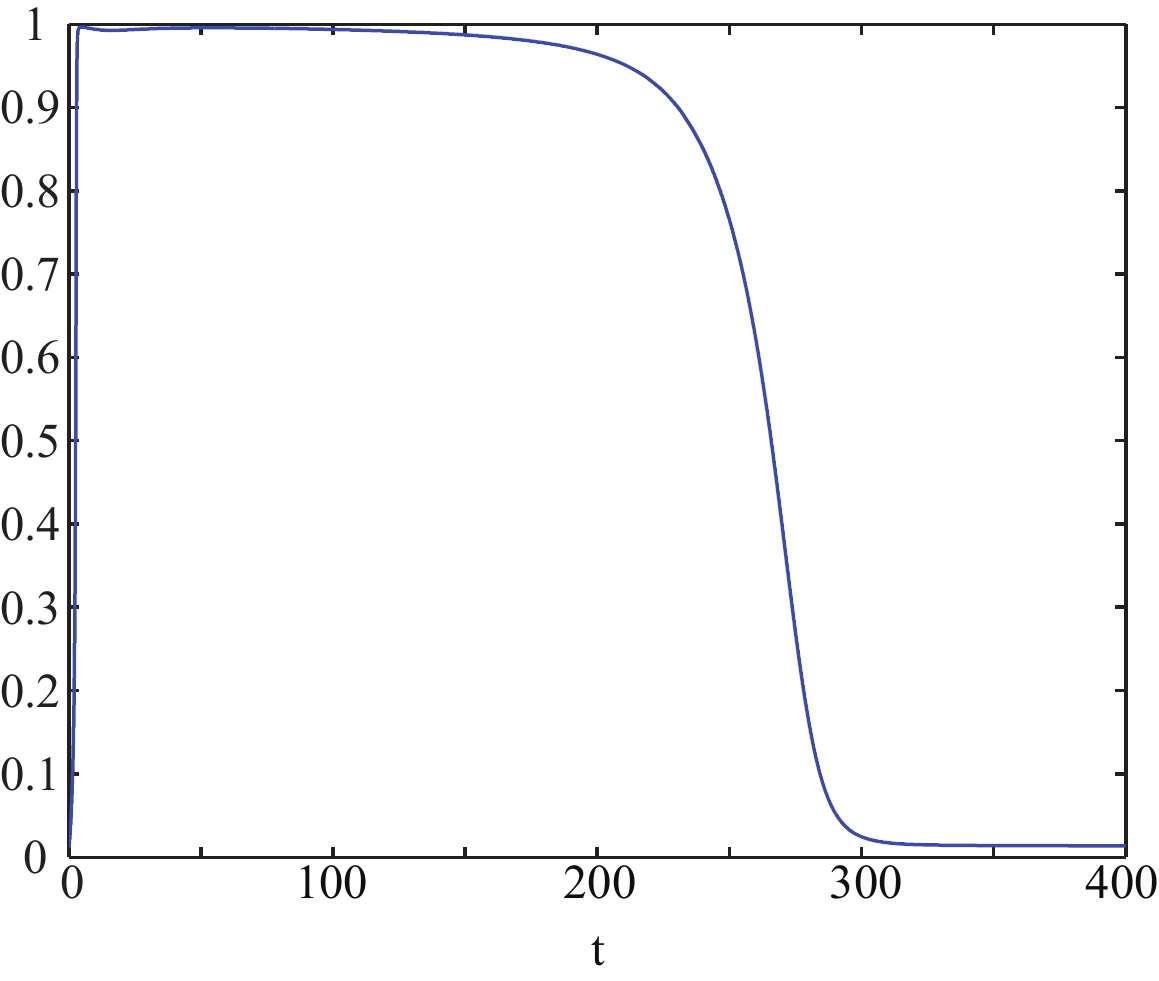} \qquad
    \includegraphics[height=4.5cm]{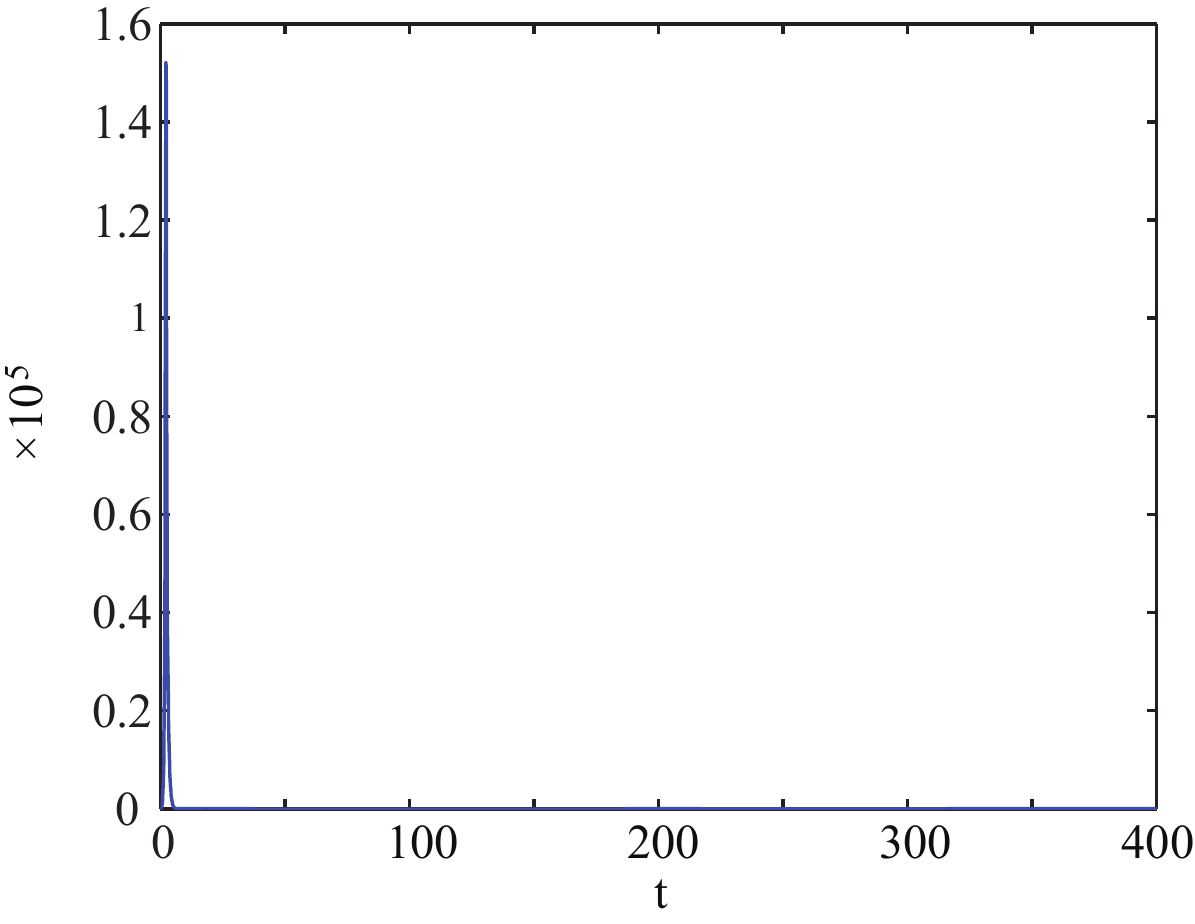}
    \caption{Solutions of the primal (left column) and adjoint (right
      column) fast ODE gating variables measured in the center of $\Omega$.}
    \label{fig:ODEprimaldualfast}
  \end{center}
\end{figure}

\begin{figure}
  \begin{center}
    \includegraphics[height=4.5cm]{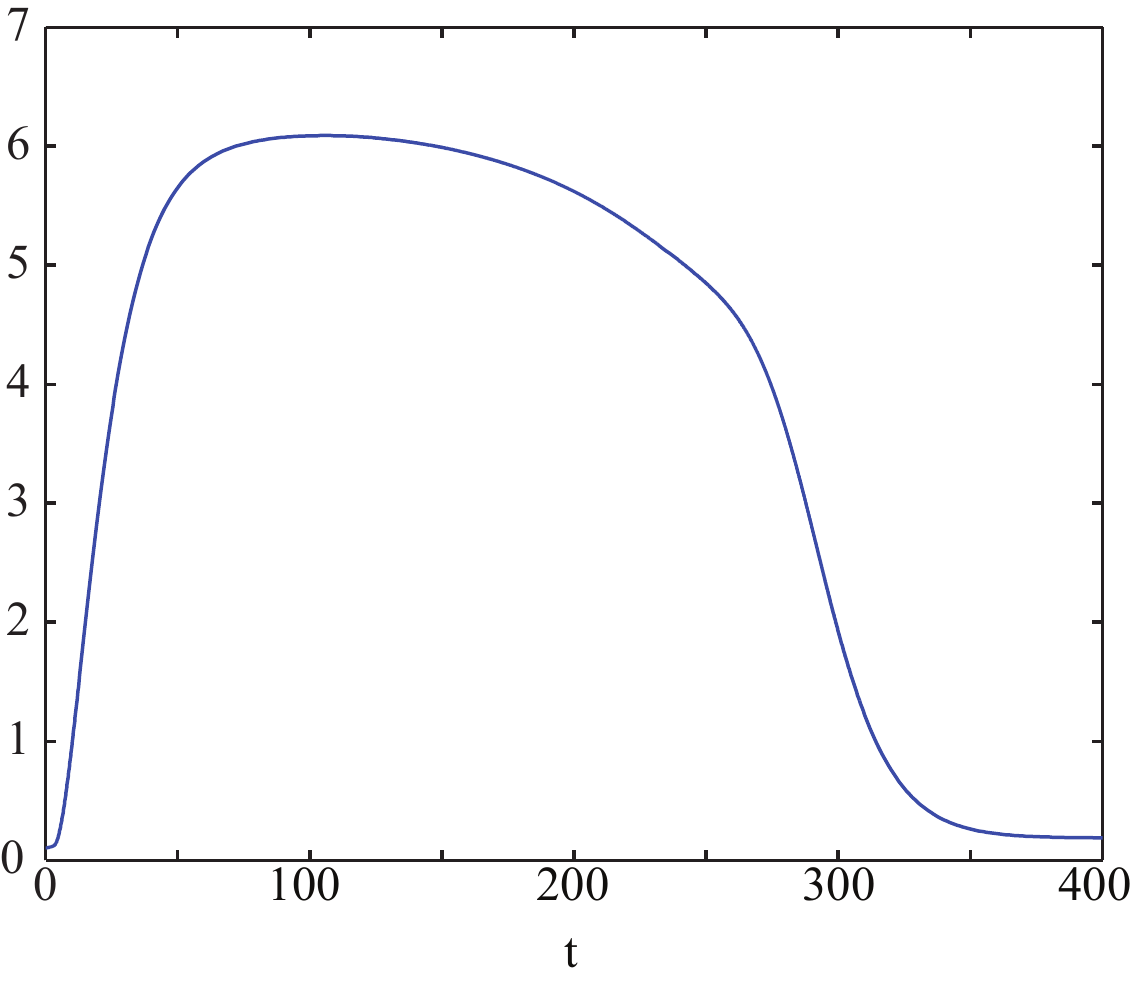} \qquad
    \includegraphics[height=4.5cm]{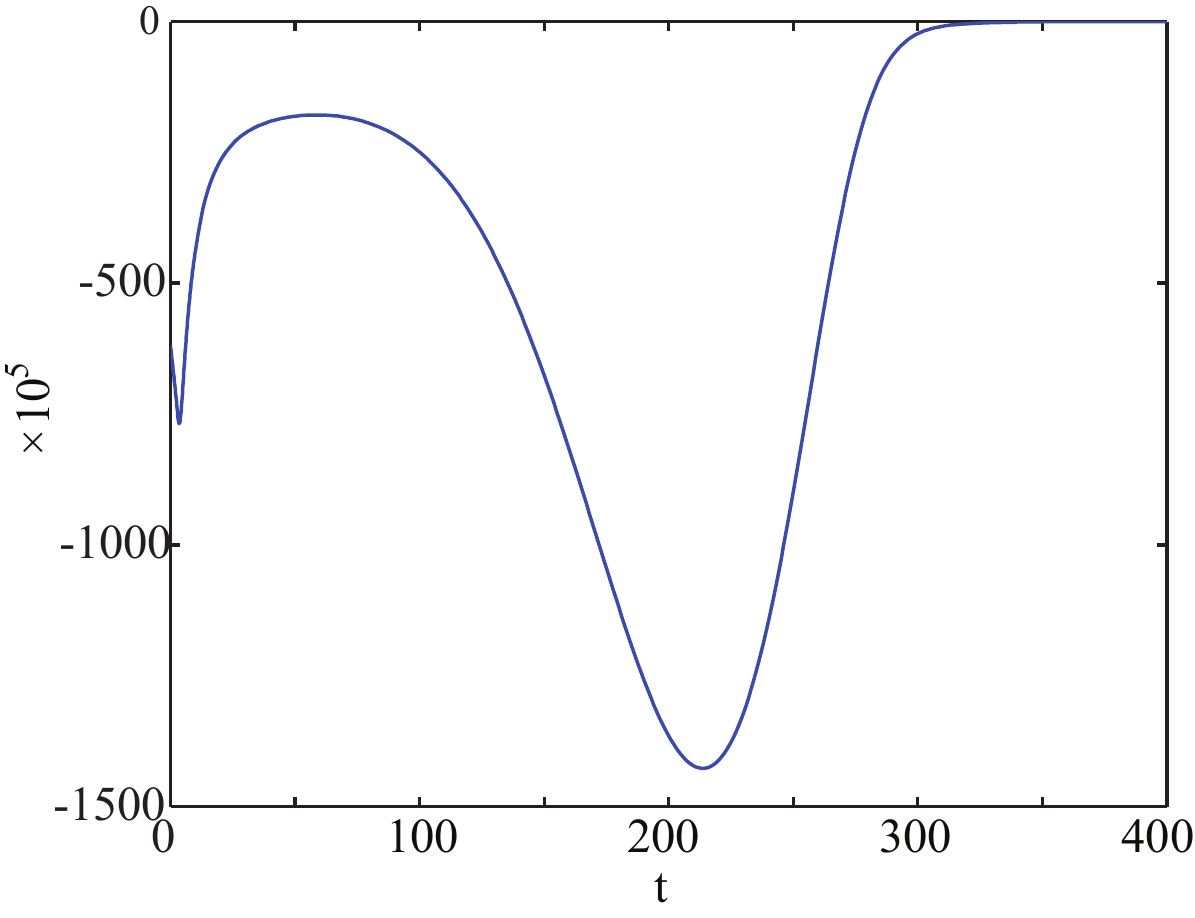}
    \caption{The calcium ion concentration and its adjoint measured in
      the center of $\Omega$. }
    \label{fig:ODEprimaldualcalcium}
  \end{center}
\end{figure}

\begin{figure}
  \begin{center}
    \includegraphics[height=4.5cm]{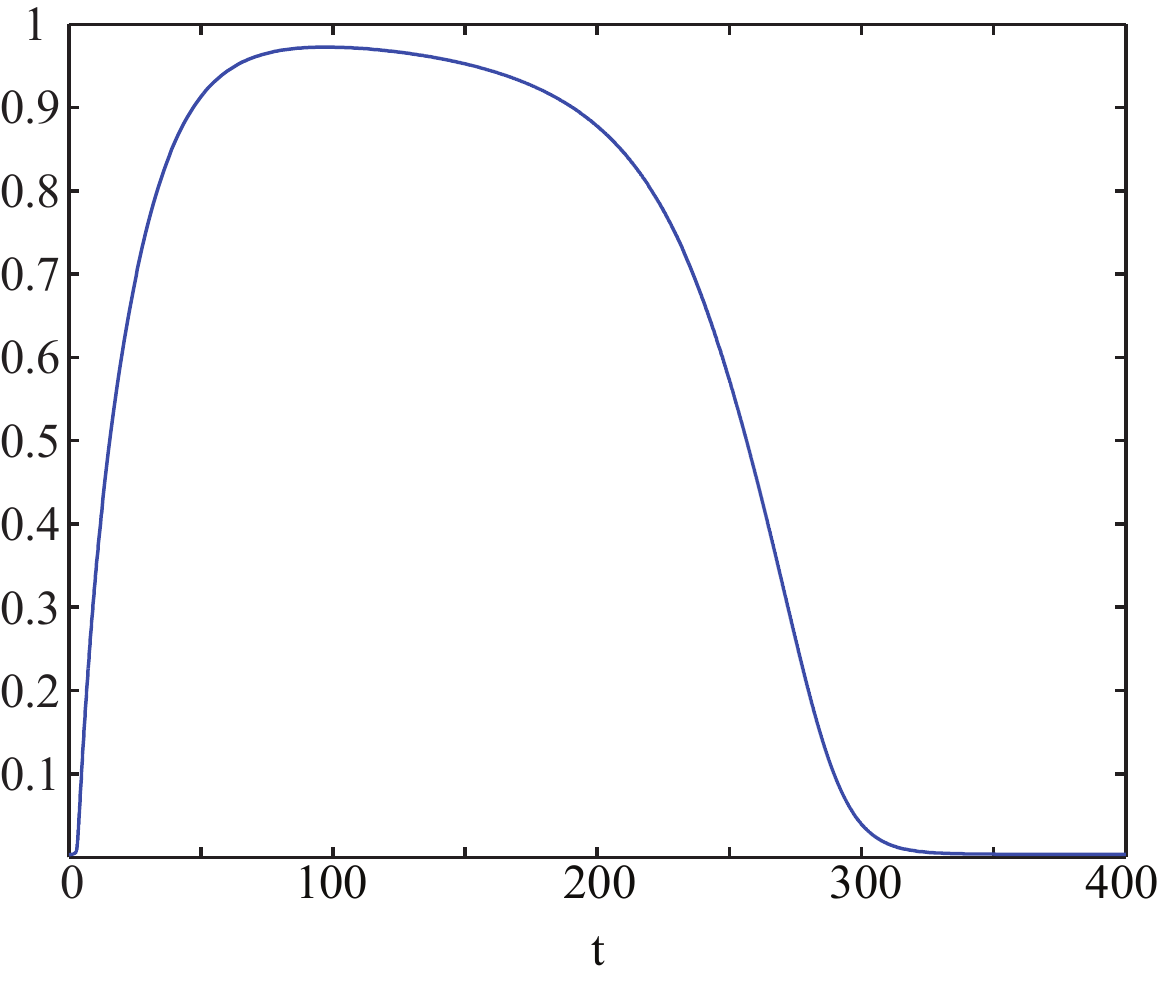} \qquad
    \includegraphics[height=4.5cm]{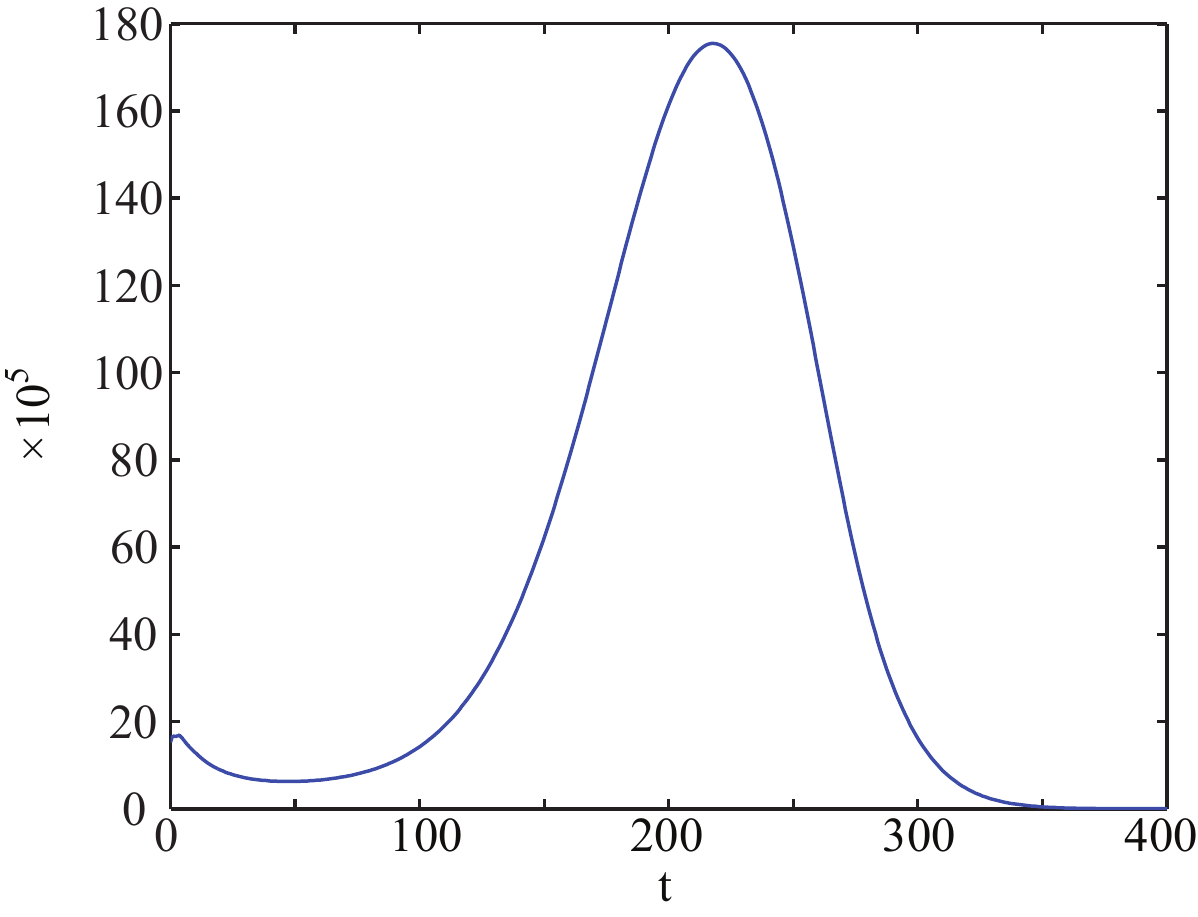}\\
    \includegraphics[height=4.5cm]{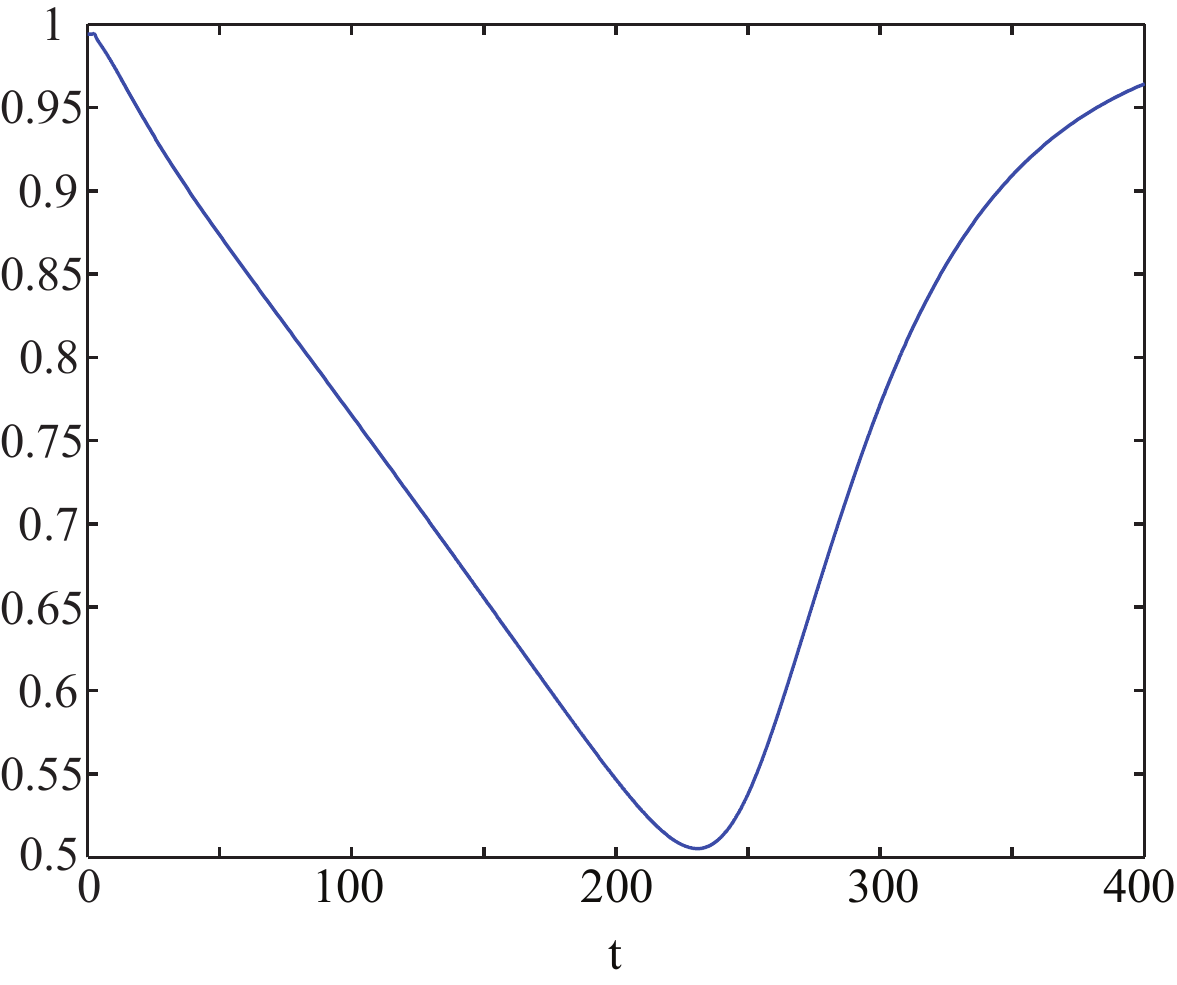} \qquad
    \includegraphics[height=4.5cm]{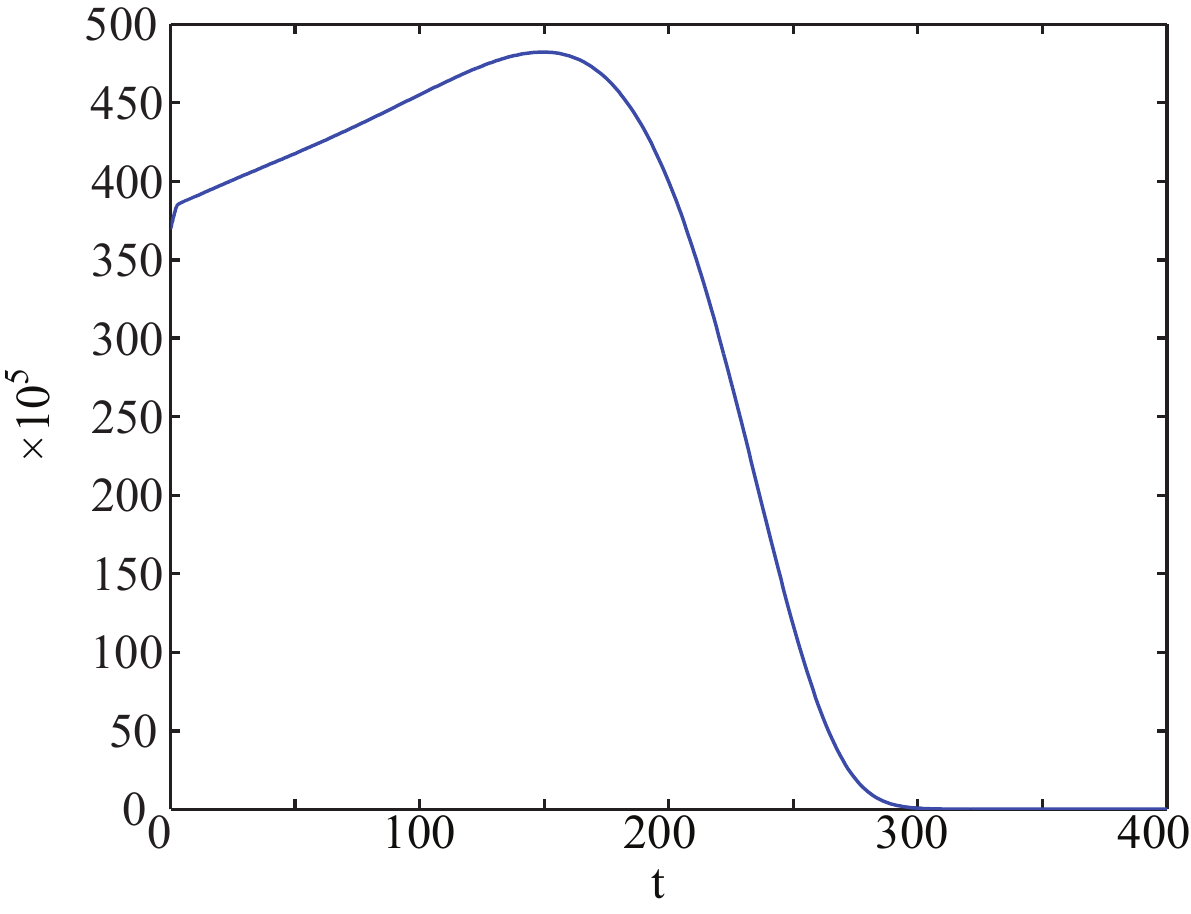}\\
    \includegraphics[height=4.5cm]{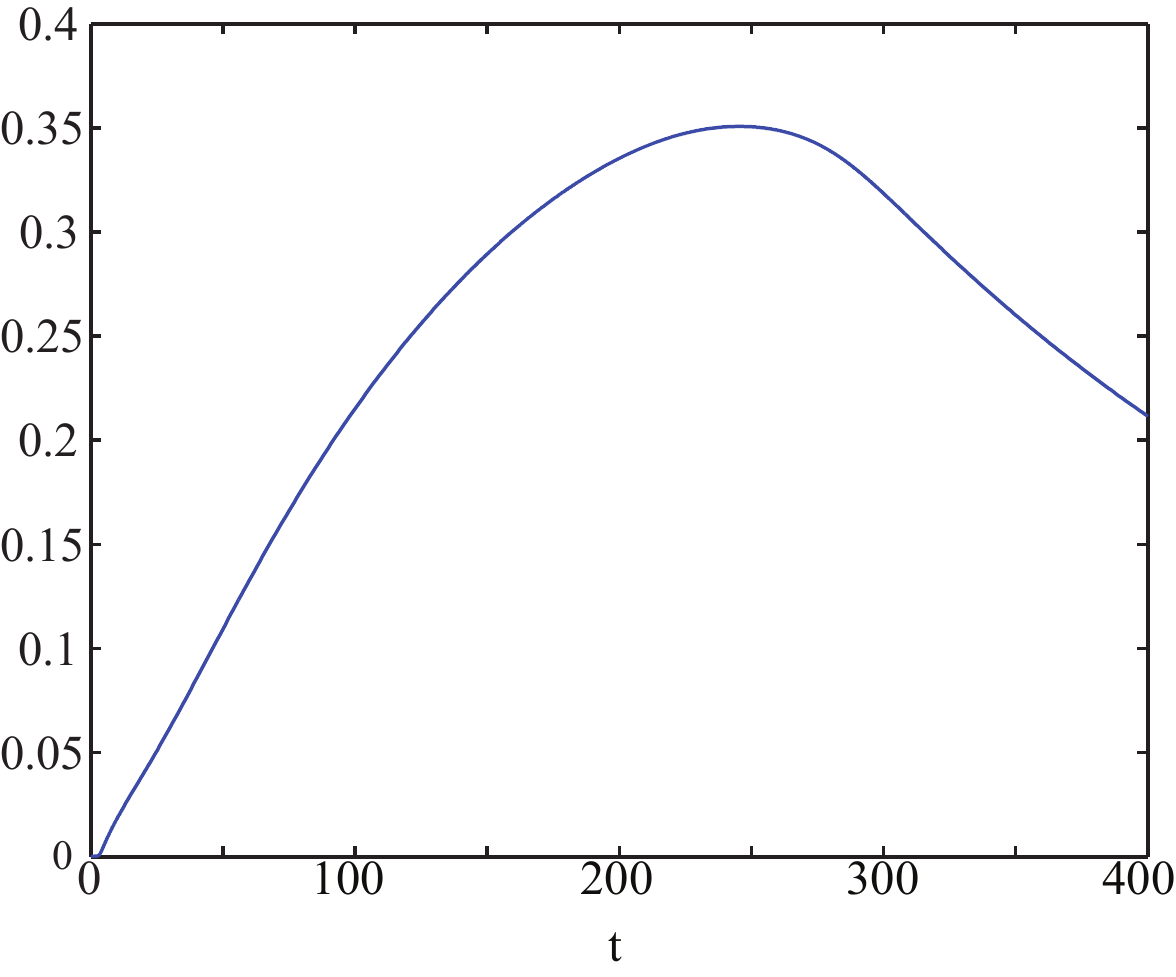} \qquad
    \includegraphics[height=4.5cm]{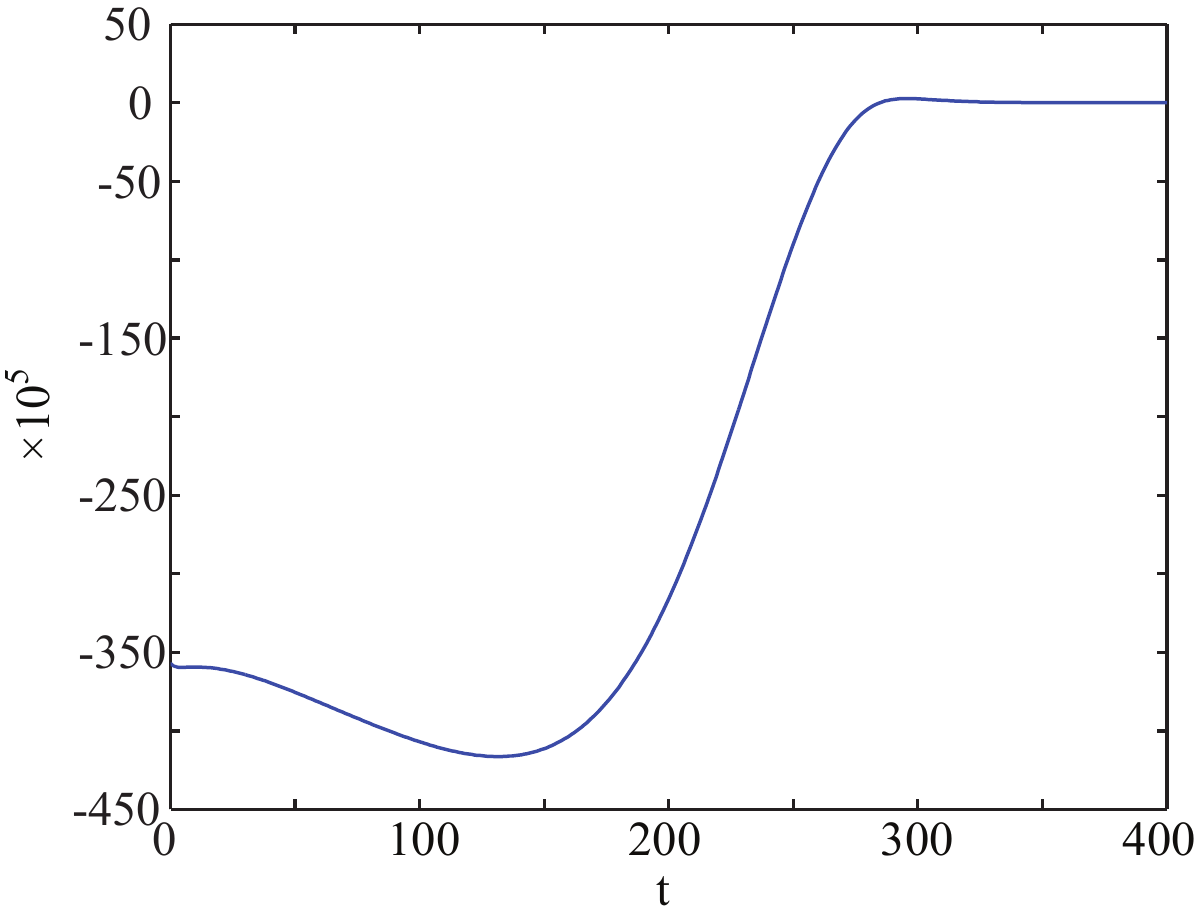}\\
    \caption{Solutions of the primal (left column) and adjoint (right
      column) slow ODE gating variables measured in the center of $\Omega$.}
    \label{fig:ODEprimaldualslow}
  \end{center}
\end{figure}

Recalling that the six of the ODE variables are gating variables and
one models the calcium ion concentration, we can identify two types of
gating variables by looking at Figures \ref{fig:ODEprimaldualfast},
\ref{fig:ODEprimaldualcalcium} and \ref{fig:ODEprimaldualslow}. Three
rapid gating variables in Fig. \ref{fig:ODEprimaldualfast}, which
has a fast change in state in the beginning and then returns to its
original state after about 250-350 ms -- thus slightly later than the
large peak of $\Phi_u$. Moreover, the calcium concentration in Fig.
\ref{fig:ODEprimaldualcalcium} seem to be strongly related to $\Phi_u$
at around 220 ms. Fig. \ref{fig:ODEprimaldualslow} illustrates three
slower gating variables, and similar to the calcium concentration, they
are less influential for small $t$ but rather have their significance
when the system goes back to its resting state.

To examine convergence
properties, we evaluate the error terms after varying the number of
iterations $L_n$ in the iterative multirate method, the spatial and
temporal mesh sizes of the PDE as well as the time steps of the
ODEs. These four parameters were changed uniformly to produce Figures
\ref{fig:errordx} and \ref{fig:errords}.  The default values in these experiments are
$4096$ elements, $\Delta t=\Delta s = 0.1$, $L_n=1$, $\No=10000$ and
$T=20$. As can be seen the errors in the various terms of
\eqref{errorformula2} decrease as expected. The main observation is
that the splitting error Term $V$ dominates the total error, but can
be controlled by increasing $L_n$, cf.\ Fig.
\ref{fig:errords}. In Fig. \ref{fig:Nvcells} we show the effect
of varying the number of Voronoi cells, $\No$, which also show
expected behavior.

\begin{figure}
  \begin{center}
    \includegraphics[scale=0.4]{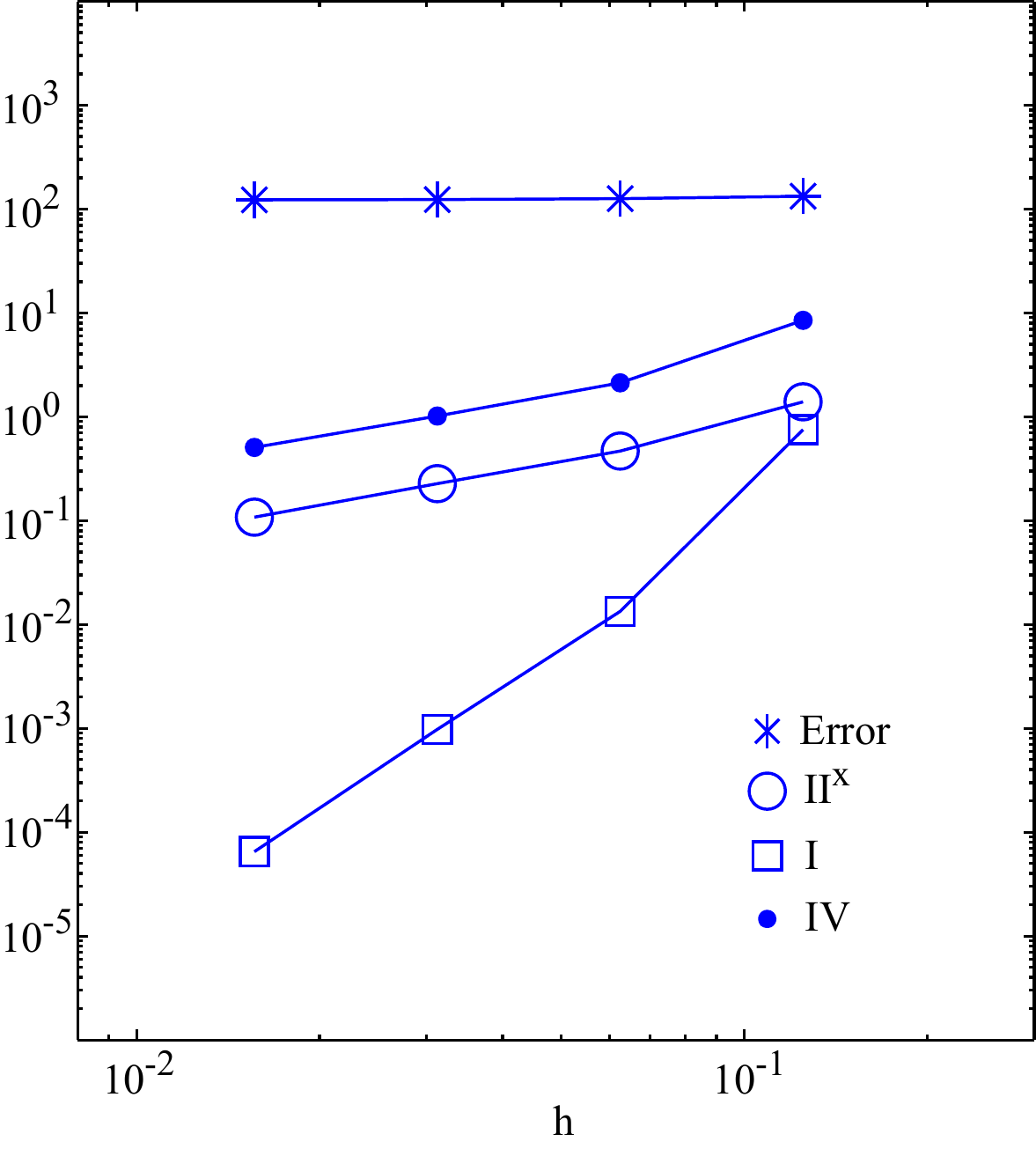}$\qquad$
    \includegraphics[scale=0.4]{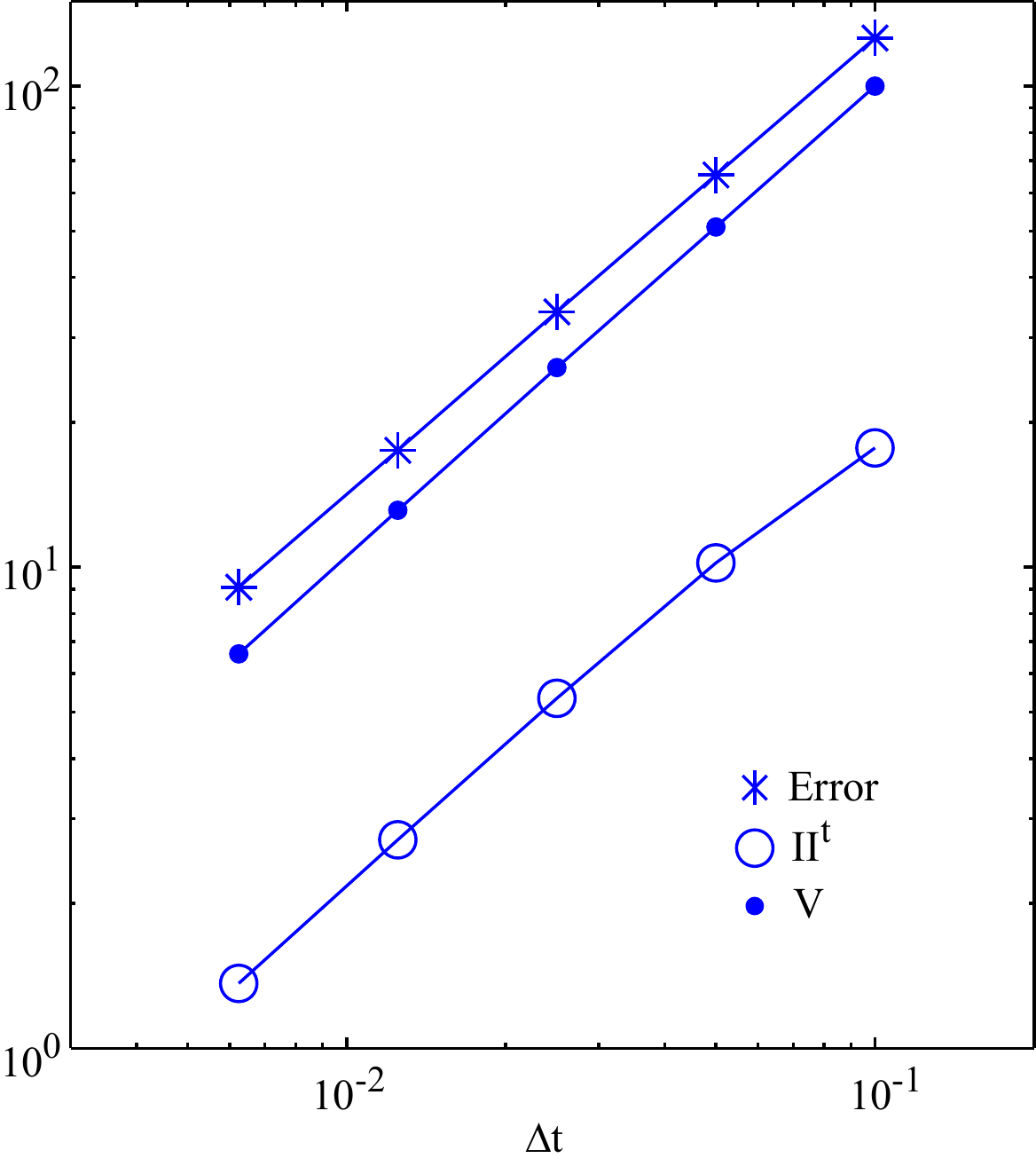}
    \caption{Left: Error and selected contributions as the spatial mesh size varies. Right: Error and selected contributions as the PDE time step varies.}
    \label{fig:errordx}
  \end{center}
\end{figure}

\begin{figure}
  \begin{center}
    \includegraphics[scale=0.4]{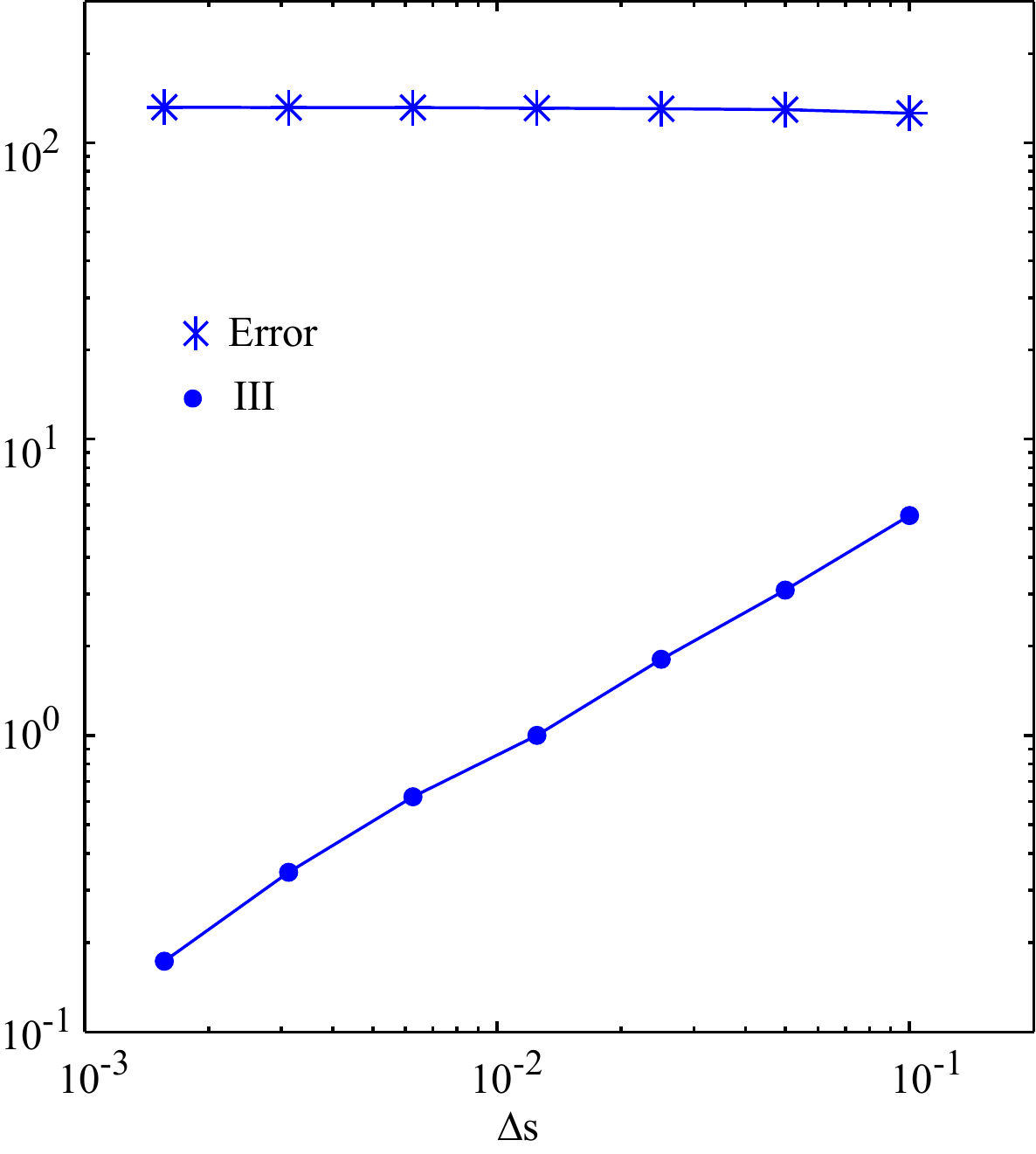}$\qquad$
    \includegraphics[scale=0.4]{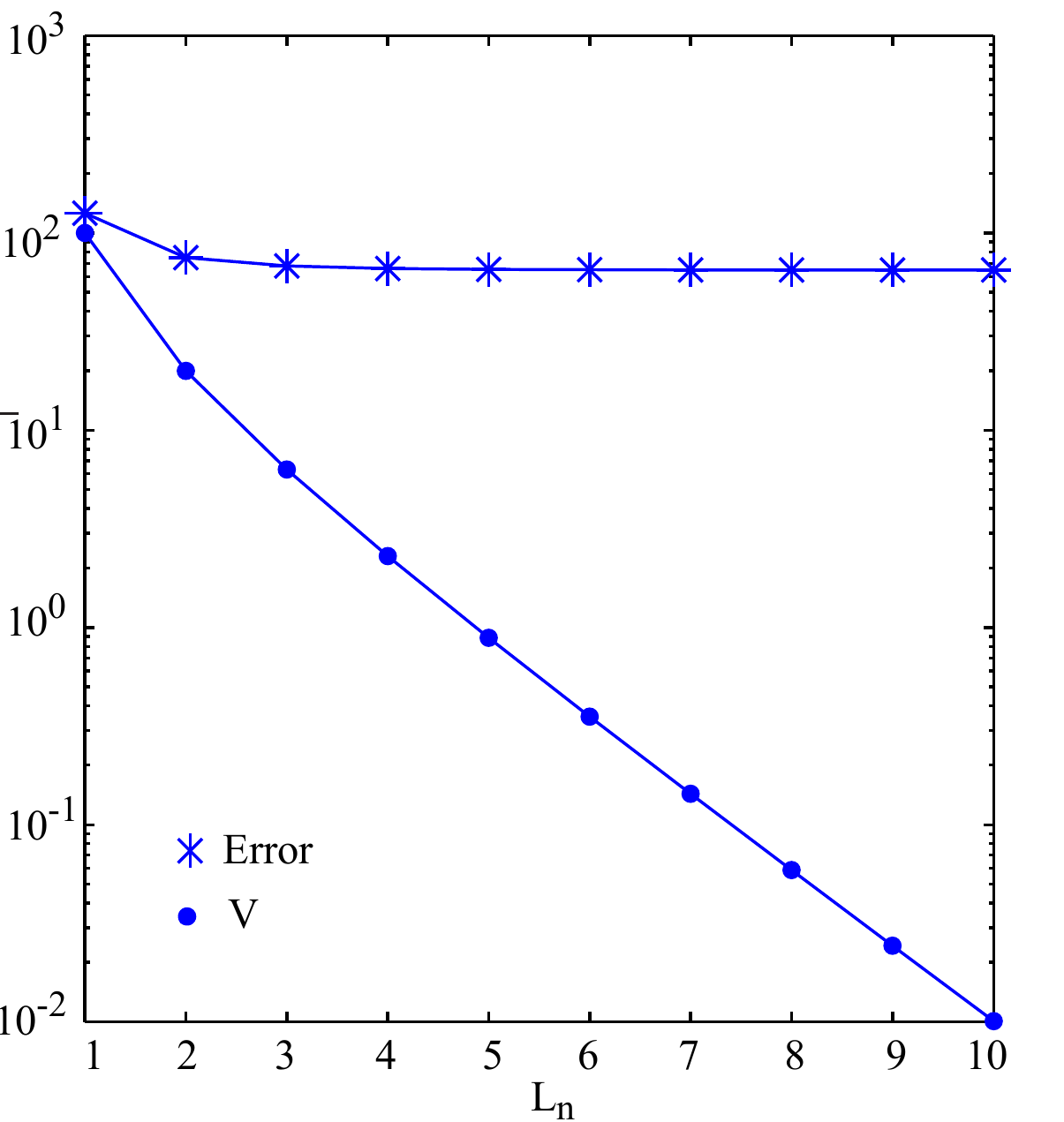}
    \caption{Left: Error and selected contributions as the ODE time step varies. Right: Error and
    selected contribution as the number of iterations $L_n$ in the multirate iterative scheme varies. }
    \label{fig:errords}
  \end{center}
\end{figure}

\begin{figure}
  \begin{center}
    \includegraphics[scale=0.4]{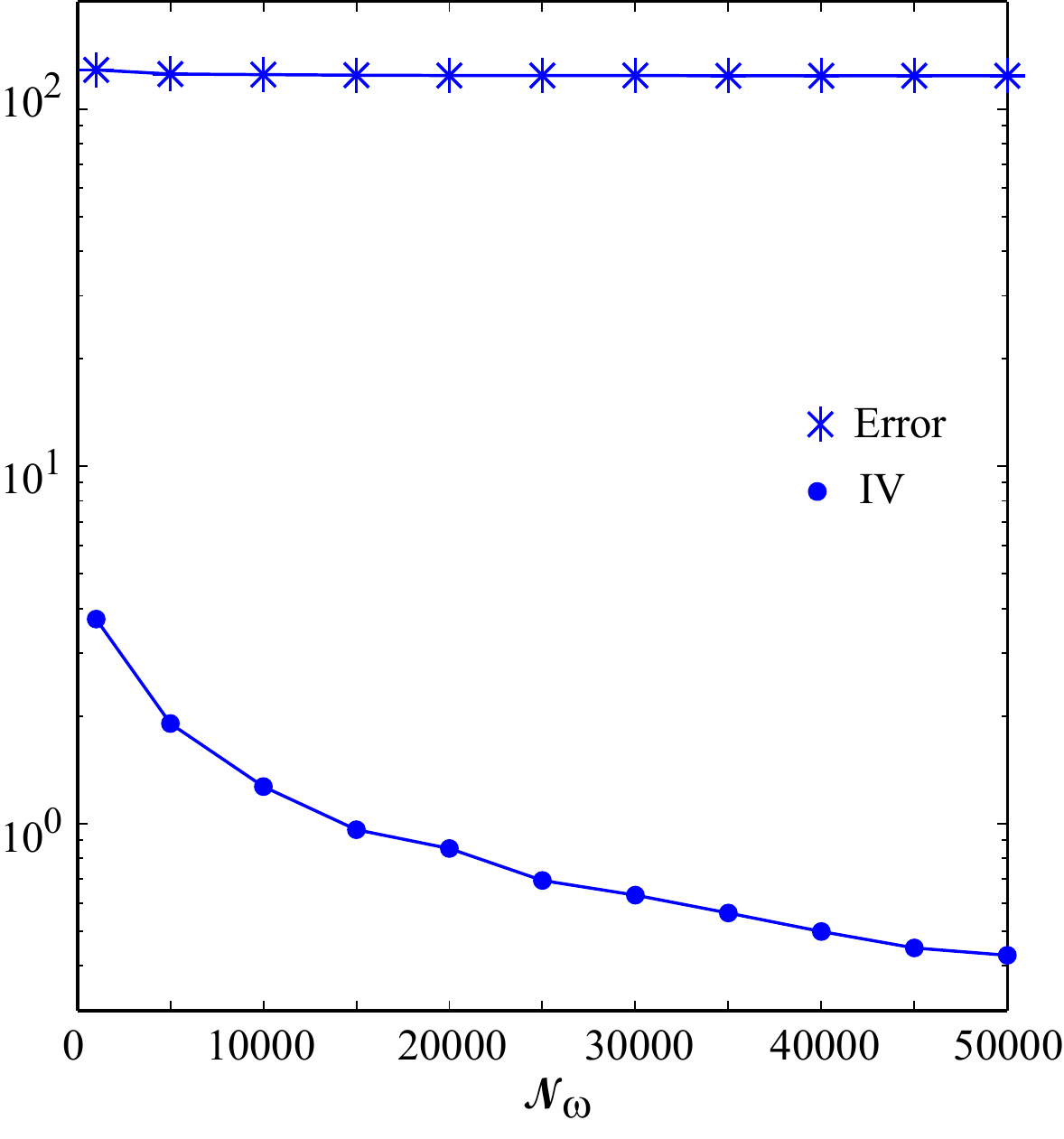}
    \caption{Error and selected contribution as the number of Voronoi cells $\No$ varies.}
    \label{fig:Nvcells}
  \end{center}
\end{figure}

The large dynamics in the system are naturally manifested in the
number of blocks, and the spatial and temporal discretizations on
these blocks. With the given tolerances and parameter values, $8$
blocks are obtained and basic statistics about these blocks can be
found in Table \ref{table:blocks}. As can be seen, the sizes of the
first four blocks are limited by the $\xMAX$ criteria, with predicted
number of elements being $77309$, $79829$, $94074$ and
$87403$. Recalling that $\xMAX=75000$, we thus obtain significantly
more elements. This can be understood by looking at the block creation
procedure in detail. For example, considering block 3, the predicted
number of elements for the first interval $(2.3,2.4]$ was
  $48987$. Since this is less than $\xMAX$, the procedure considers
  also the next interval $(2.4,2.5]$. The predicted number of elements
    for the total interval $(2.3,2.5]$ results in $71870$
      elements. This is also less than $\xMAX$, albeit close. The fact
      that this prediction is close to $\xMAX$ and knowing that the
      dynamics of the problem is rapid could suggest that the block
      should be ended. In the implementation here, the $\xMAX$
      criterion is a strict inequality and the interval $(2.5,2.6]$ is
        considered. This results in a prediction of $94074$ elements
        to guarantee error control. The true number of elements
        obtained are slightly larger, although less than $1$ \%, due
        to the constraint of having at most one hanging node per
        element edge.

The blocks 5, 6, 7 and 8 have the same spatial discretization with
$45718$ elements. Recall that the reason for the spatial meshes not
being coarsened is due to the parameter $\theta$ \eqref{eq:theta}. As
can be seen in Table \ref{table:blocks}, blocks 5, 6 and 7 are limited
in time by the $\tMAX=1000$ criteria. Finally we note that the
variation in time is great: the time step size for the PDE and the
ODEs are in the range of $10^{-3}$ to $1$ ms and $10^{-4}$ to $1$ ms
respectively. Fig. \ref{fig:pdetimesteps} illustrates the various
time steps over time for the PDE.

\begin{table}
  \begin{center}
    \caption{Date on blocks: $b$ is the block number, $T_b$ is the
      end time in ms and $\abs{I_b}$ is the number of time intervals
      in block number $b$. $\Delta t$ is the PDE time step. $M_n$ is the number of ODE time
      subintervals with $\max$ and $\mean$ values. $N_x$ is the number
      of elements that are predicted and actually used.}\label{table:blocks}
    \begin{tabular}{c c c c c c c c c c} \\
      \toprule
      $b$ & $T_b$ (ms) & $\abs{I_b}$ &\multicolumn{3}{c}{$\Delta t\times 10^{-3}$ ($\mu$s)} & \multicolumn{2}{c}{$M_n$} &\multicolumn{2}{c}{$N_x$}\\
      \cmidrule(r){4-6} \cmidrule(r){7-8} \cmidrule(r){9-10}
      & & & $\min$ &
      $\max$ &
      $\mean$ &
      $\max$ &
      $\mean$ &
      predicted &
      used \\
      \midrule
      1 & 1.9 & 352 & 2.27 & 25.0 & 5.41 & 8 & 2.02 & 77309 & 77554\\
      2 & 2.3 & 182 & 1.63 & 3.58 & 2.21 & 6 & 2.07 & 79829 & 79899\\
      3 & 2.6 & 173 & 1.61 & 1.93 & 1.74 & 5 & 2.13 & 94074 & 94158\\
      4 & 2.8 & 120 & 1.51 & 1.89 & 1.68 & 6 & 2.18 & 87403 & 87508\\
      5 & 159.5 & 1000 & 0.78 & 1000 & 157 & 6 & 2.04 & 45718 & 45935 \\
      6 & 266.4 & 1000 & 28.0 & 1000 & 107 & 2 & 2 & 45718 & 45935\\
      7 & 293.7 & 1000 & 22.0 & 56.0 & 27.4 & 2 & 2 & 45718 & 45935\\
      8 & 400 & 222 & 55.0 & 1000 & 481 & 2 & 2 & 45718 & 45935\\
      \bottomrule
    \end{tabular}
  \end{center}
\end{table}

\begin{figure}
  \begin{center}
    \includegraphics[scale=0.5]{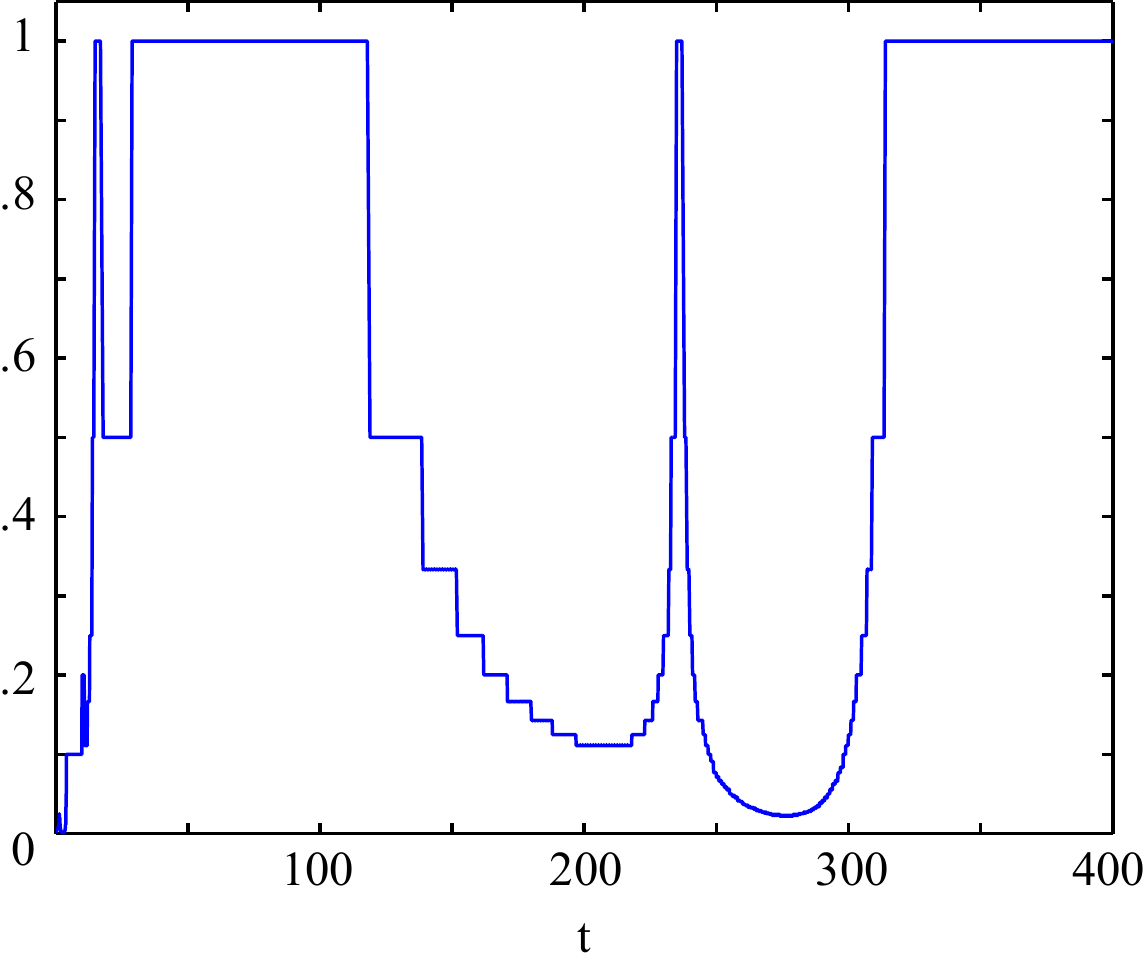}
      \caption{The time steps of the PDE.}
      \label{fig:pdetimesteps}
  \end{center}
\end{figure}


\section{Conclusion}

We consider a problem of a macroscale parabolic PDE which is coupled to a set of
microscale ODEs. We introduce an intermediate scale to couple information between the scales, and use projections to transfer information to the intermediate scale. We use a Monte Carlo method to deal with the very high dimension of the system of ODEs. We also allow the ODEs to be solved on a much finer scale than the PDE. We derive an adjoint-based a posteriori estimate that accounts for all of the key discretization components, and use the estimate to derive indicators of element contributions to the error both in space and time for the PDE
and in time for the ODEs. The estimates take into
account errors in the data passed between the PDE and the ODEs, as
well as the fact that the ODEs are modeled on a much smaller scale
than that of the PDE. The indicators are used to guide an algorithm for adaptive error control.

Finally, we test the adaptive algorithm on a realistic problem.

Future work could consider parallel blockwise adaptivity. Since
the ODEs in this model do not interact inbetween spatial elements,
this set of ODEs constitute an embarrassingly parallel problem and can
simply be parallelized using a graphical processing unit,
GPU. Developing adaptive algorithms designed for modern computer
technologies with several memory hierarchies such as a GPU are indeed
interesting, and the blockwise adaptivity could be one method for
limiting the number of data transfers between hierarchies.

\section*{Acknowledgements}
J.~H.~Chaudhry's work is supported in part by the Department of Energy (DE-SC0005304, DE0000000SC9279).

V.~Carey's work is supported in part by the Department of Energy (DOE-ASCR-1174449-5).

D.~Estep's work is supported in part by the Defense Threat Reduction Agency (HDTRA1-09-1-0036), Department of Energy (DE-FG02-04ER25620, DE-FG02-05ER25699, DE-FC02-07ER54909, DE-SC0001724, DE-SC0005304, INL00120133, DE0000000SC9279), Dynamics Research Corporation PO672TO001, Idaho National Laboratory (00069249, 00115474), Lawrence Livermore National Laboratory (B573139, B584647, B590495),  National Science Foundation (DMS-0107832, DMS-0715135, DGE-0221595003, MSPA-CSE-0434354, ECCS-0700559, DMS-1065046, DMS-1016268, DMS-FRG-1065046, DMS-1228206), and the National Institutes of Health (\#R01GM096192).

V.~Ginting's work is supported in part by the National Science Foundation (DMS-1016283) and the Department of Energy (DE-SC0004982).

M.~Larson's work is supported in part by the Swedish Foundation for Strategic Research Grant (AM13-0029) and the Swedish Research Council Grants (2013-4708,2010-5838).

S.~Tavener's work is supported in part by the Department of Energy (DE-FG02-04ER25620, INL00120133) and National Science Foundation (DMS-1016268).

\bibliographystyle{elsarticle-num}
\bibliography{references}

\end{document}